%
%
%
%

\documentclass{article}
\usepackage{amssymb}
\usepackage{amsmath}
\usepackage{amsthm}

\usepackage{multirow}
\usepackage{tabularx}

\usepackage{diagbox}
\setcounter{tocdepth}{3}
\usepackage{graphicx}
\usepackage{subcaption}

\captionsetup{compatibility=false}

\usepackage[dvipsnames]{xcolor}
\usepackage{booktabs}
\usepackage{thmtools, thm-restate}

\graphicspath{{figures/}}

\newtheorem{theorem}{Theorem}
\newtheorem{lemma}{Lemma}

\newcolumntype{L}[1]{>{\raggedright\let\newline\\\arraybackslash\hspace{0pt}}m{#1}}
\newcolumntype{C}[1]{>{\centering\let\newline\\\arraybackslash\hspace{0pt}}m{#1}}
\newcolumntype{R}[1]{>{\raggedleft\let\newline\\\arraybackslash\hspace{0pt}}m{#1}}

\begin{document}


\title{Generalized Multiscale Multicontinuum Model for Fractured Vuggy Carbonate Reservoirs}

%
%


\author{Min Wang\thanks{Department of Mathematics, Texas A\&M University, College Station, TX 77843, USA (\texttt{wangmin@math.tamu.edu}).}
   \and Siu Wun Cheung\thanks{Department of Mathematics, Texas A\&M University, College Station, TX 77843, USA (\texttt{tonycsw2905@math.tamu.edu}).}
\and Eric T. Chung\thanks{Department of Mathematics, The Chinese University of Hong Kong, Shatin, New Territories, Hong Kong SAR, China (\texttt{tschung@math.cuhk.edu.hk}).}
\and Maria Vasilyeva\thanks{Institute for Scientific Computation, Texas A\&M University, College Station, TX 77843 \& Department of
Computational Technologies, North-Eastern Federal University, Yakutsk, Republic of Sakha (Yakutia), Russia,
677980. (\texttt{vasilyevadotmdotv@gmail.com}).}
 \and Yuhe Wang\thanks{Department of Petroleum Engineering, Texas A\&M University at Qatar, Doha, Qatar (\texttt{yuhe.wang@qatar.tamu.edu}).}}


\maketitle

\begin{abstract}
 Simulating flow in a highly heterogeneous reservoir with multiscale characteristics could be considerably demanding. To tackle this problem, we propose a numerical scheme coupling the Generalized Multiscale Finite Element Method (GMsFEM) with a triple-continuum model aimed at a faster simulator framework that can explicitly represent the interactions among different continua. To further enrich the descriptive ability of our proposed model, we combine the Discrete Fracture Model (DFM) to model the local effects of discrete fractures. In the proposed model, GMsFEM, as an advanced model reduction technique, enables capturing the multiscale flow dynamics. This is accomplished by systematically generating an approximation space through solving a series of local snapshot and spectral problems. The resulting eigen-functions can pass the local features to the global level when acting as basis functions in coarse problems. Our goal in this paper is to further improve the accuracy of flow simulation in complicated reservoirs especially for the case when multiple discrete fractures located in single coarse neighborhood and multiscale finite element methods fail. Together with a detailed description of the model, several numerical experiments are conducted to confirm the success of our proposed method. A rigid proof is also given in the aspect of numerical analysis.
\end{abstract}

\section{Introduction}

Simulating fluid flow in a fractured vuggy carbonate reservoir has always been of great interest and challenge to both academia and the petroleum industry. A large share of 
worldwide hydrocarbon resources are stored in such reservoirs. For this reason, it is a long-lasting quest of many scientists and engineers to accurately model the underground flow dynamics for a better understanding of fractured vuggy carbonates. Nevertheless, the simulation of flow within such heterogeneous formations is notorious for natural coexistence of different continua, and distinct scales of underlying porous media. Not only the sizes of vugs are in different orders of magnitude, the fracture configurations are also highly diverse. On top of all, the connectivity of vugs and fractures are complicated, further exacerbating the fluid flow modeling in such media. Mass 
would transfer among different scales and continua 
in different forms, which makes flow modeling a task full of challenges.

Among many tools that have been developed to address the characteristics of such problems, an easy and convenient way to explore the dynamics is to use a fine scale simulation. A fine partition is required to divide the domain into local pieces and then a global description of the flow is obtained by puttting the local solutions together. A common fine scale simulations can be conducted under frameworks such as the Finite Element Method (FEM) \cite{baca1984modelling}, Finite Volume 
Method (FVM) or Finite Difference Method (FDM). 

Yet, due to high complexity of the media, it is impossible by 
nature for a fine simulation to simultaneously resolve all-scale media effects. One would either lose important small-scale information or run into a system that is extremely expensive to solve. Thus, a model-reduction scheme is necessary to design a practical numerical method for flow simulation in complicated porous media such as fractured vuggy carbonates. 

Homogenization is commonly used in many model reduction methods \cite{matache2002two,arbogast1990derivation}. With homogenization, the domain of interest is partitioned into many coarse blocks, and the effective properties are calculated for each coarse block aiming at homogenizing local heterogeneous media using information in finer scale within the block. These pre-computed properties can thus catch and average fine scale characteristics and further calibrate the coarse solution accordingly \cite{durlofsky1991numerical}. 
This up-scaling scheme has been proved to be quite effective for simulations on media with scale separation or periodicity \cite{hassani1998review}, but it fails to model the interaction between matrix, fractures and vugs. 

That is exactly what motivated the multi-continuum model. To represent the flow between different continua, each continua in the domain of interest is now considered as a system that expands the whole domain. For example, a fractured and vuggy heterogeneous carbonate reservoir can now be described as three parallel continuum: fractures, matrix and vugs. Different continua coexists at every spot of the domain while they macroscopically interact with each other. The interaction between different continua is coupled based on the mass conservation law. For each continua, both intra and inter flow transfers are modeled.
The first multi-continuum model, the dual porosity model (DPM), is proposed by Barenblatt for flow through naturally fissured rock \cite{barenblatt1960basic}. In his work, two continuum were proposed to represent low and high porosity continua, respectively \cite{warren1963behavior,thomas1983fractured}. And later, a third continua was introduced by researchers to generalize the application of such an idea \cite{pruess1982practical,zhang2018multiscale,yan2016beyond,yan2016general}. Thanks to the simplicity and flexibility of this method, it has been widely adopted in many fields \cite{li2018multiscale}. However, the limitation of the multi-continuum model is also prominent, as it assumes all continuum are connected globally. This assumption will only be valid when each continua has merely global effects. 
For well-developed fractures, one continua is sufficient to represent 
its effects, yet such continua fails to model, for example, the 
independent long fractures as theses fractures may have local 
responses that can not be globalized.

Such "dicontinuum,'' like discrete fractures, are pretty common in reservoir modeling . This inspires us to also consider a discrete model. 
Ideally, the discrete models should be able to describe medias such as 
fractures that mainly contribute to local flow transfers. One common choice is the Embedded Fracture Model (EDFM). Discrete fractures and rock volume cells are considered as two separate systems, and the flow transfer between them is modeled element by element \cite{lee1999efficient,li2006efficient,chai2018efficient}. EDFM is computationally effective in the sense its discretization of rock (matrix) is independent of the spatial distribution of fractures. Yet, its effectiveness is sensitive to conductivity contrast. Another classic ``dicontinuum'' approach is the Discrete Fracture Model (DFM). DFM, on the other hand, can also be used to describe cross-flow between the fractures and matrix which are spatially adjacent to each other \cite{noorishad1982upstream,karimi2001numerical,akkutlu2017multiscale,akkutlu2018multiscale}. Unlike developed fractures, each discrete fracture is described using a $n-1$ dimension element instead of a $n$-dimensional continua. Researches have shown that using discrete networks can accurately resolve the local characteristics of the media with discrete fractures regardless of the conductivity contrast. Discontinuous models for vugs were also developed which take both free and porous flow into consideration \cite{yao2010discrete}. 
 
In order to overcome the constraints of the homogenization scheme, and enrich the heterogeneous information reserved from the local fine-sacle region, the Multiscale Finite Element Method (MsFEM) was developed \cite{efendiev2011multiscale}. Like homogenization, the domain of interest is first partitioned with a coarse mesh. Each local block is then further partitioned with a finer mesh. With this two-level mesh, 
MsFEM can build a series of basis functions in each local region. These basis functions are obtained by solving a series of local problems which incorporate the heterogeneous characteristics of the local coarse region. The basis functions for all coarse regions will work together as a new global approximation space to replace the standard one used in FEM. With such setting, the size of the resulting numerical system is reduced significantly. MsFEM can be easily coupled with DFM when modeling flow in a fractured reservoir. However, more in-depth investigation indicates that when there are more than one independent fracture network in a coarse local region, MsFEM is not able to restore the local 
dynamics accurately \cite{zhang2018multiscale}.

The Generalized Multiscale Finite Element Method (GMsFEM) was later developed to tackle such a problem. It has been proven that GMsFEM can strengthen the ability of MsFEM on solving multi-scale problems. By conducting a spectral decomposition over the local snapshot space, GMsFEM can identify basis functions corresponding to dominant modes of local heterogeneous regions \cite{efendiev2013generalized,efendiev2011multiscale}. This makes automatic enrichment of the multiscale space possible \cite{efendiev2015hierarchical,wang2017generalized}. In the paper \cite{chung2017coupling}, a one-one correspondence between GMsFEM basis functions and high-conductivity networks was presented. This property of basis makes GMsFEM a necessity when dealing with practical examples like carbonate reservoir simulations, as many fracture channels may coexist in a single local region.
By nature of MsFEM and GMsFEM, the production of basis functions 
are independently conducted within each coarse neighborhood. And we can employ parallel computing to speed up the multiscale basis generation process.

With a GMsFEM framework, a multicontinuum model, and the discrete fracture network, we can inherit the merits of the three, and couple them together to improve the capability as well as accuracy of our simulation. In this paper, we use this fully coupled system to describe the flow in fractured and vuggy heterogeneous reservoirs. Conforming unstructured mesh is used to surrender the random discrete fracture networks. Fractures are treated hierarchically. Highly developed fractures with only global effects are modeled as a fracture continuum, while fractures that have local effects are embedded as discrete fracture networks. For independent vugs, a continuum is used to represent their effects with specific configurations such that no intra-flow is considered. 
GMsFEM allows us to consistently develop an approximation space that contains prominent sub-grid scale heterogeneous background information based on the multi-continuum and DFM.

The paper is organized as follows: in Section \ref{section_prelim}, the problem under discussion is clarified, followed by Section \ref{sec_multicontinuum} which briefly reviews the multi-continuum 
model. In Section \ref{sec_GMsFEM}, a step-by-step illustration on GMsFEM together with a priori error estimate is provided.
The details of time discretization of our problem is 
also discussed in this section. In Section \ref{sec_numerical}, we 
present multiple numerical results to varify the effectiveness of porposed methods. Lastly, this paper is concluded by Section \ref{sec_conclusion}.

\section{Preliminaries}\label{section_prelim}
In this paper, we consider a 2-dimensional flow problem in a multiscale porous media. We assume that the dynamic of flow is governed by the Darcy's Law. For simplicity, we ignore the gravity and the capillary pressure effects.
\subsection{Equation for slightly compressive flow in porous media}
In specific, we consider the following equation for slightly compressive flow in heterogeneous porous media, 
\begin{equation}\label{eq:flow}
\frac{\phi c}{B^{\circ}}\frac{\partial u}{\partial t} -\frac{1}{\mu}\nabla\cdot(\frac{\kappa}{B}\nabla u) = f \qquad \text{in } \Omega.
\end{equation}
Here, $\Omega$ is the computational domain. $c$ is compressibility and $\mu$ is viscosity of the liquid. $B^{\circ}$ is the formation volume factor (FVF) at reference pressure $u^0$ and $B$ is a FVF at reservoir condition. They are used to quantify compressibility of the target liquid. 
$\phi$ represents porosity of the fracture vuggy media, while 
$\kappa$ is a permeability function that bears multiscale features in the media (See Figure \ref{fig:ms_perm} for an illustration).
The solution to be sought is pressure $u$, given a production rate
$f$.
\begin{figure}[h]
	\centering
	\includegraphics[width= 0.5\textwidth]{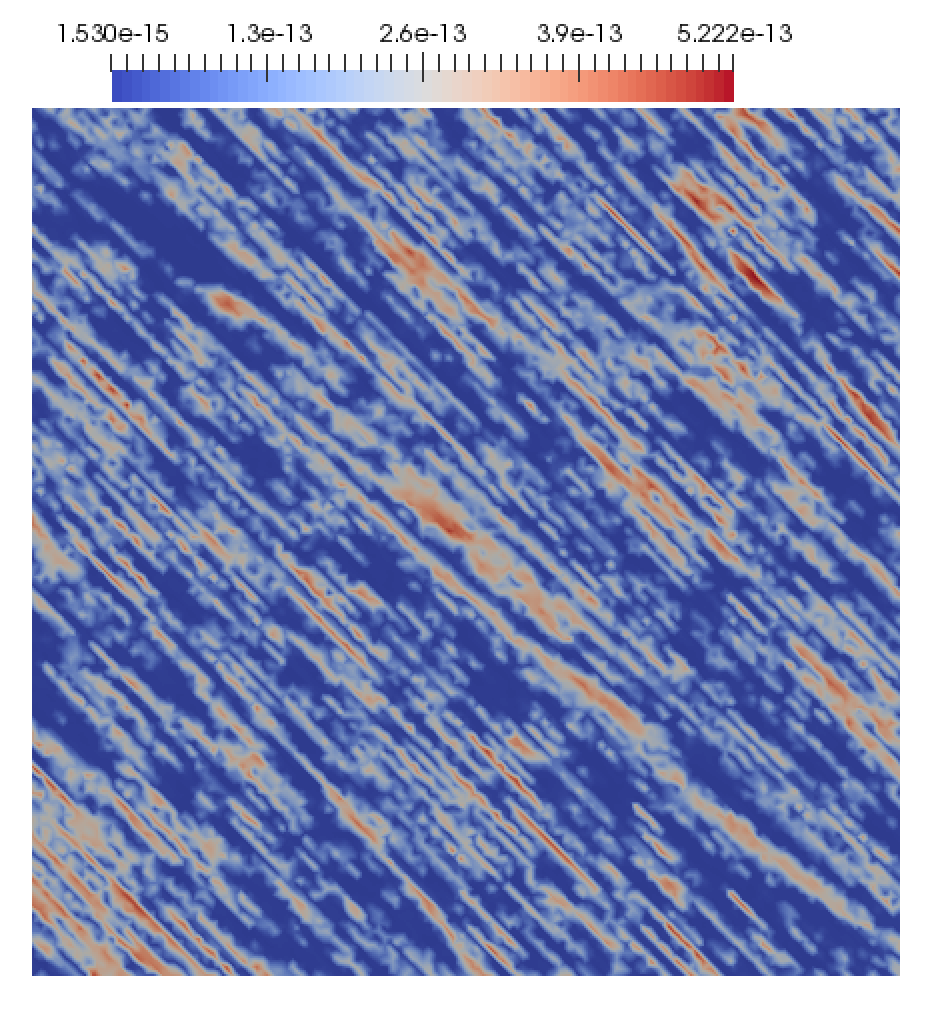}
	\caption{Permeability field $\kappa(x)$ with multiscale features}
	\label{fig:ms_perm}
\end{figure}

Limiting our interests to slightly compressible liquid, we can further employ the simplified correlation between the formation volume factor $B$ and the pressure $u$ 
\begin{equation}\label{eq:FVF}
B = \frac{B^{\circ}}{1+c(u-u^{0})}
\end{equation}
to rewrite \eqref{eq:flow} and get
\begin{equation}\label{eq:flow with B}
\frac{\phi c}{B^{\circ}}\frac{\partial u}{\partial t} -\frac{1}{\mu}\nabla\cdot(\kappa\frac{1+c(u-u^{0})}{B^{\circ}}\nabla u) = f\qquad\text{in }\Omega.
\end{equation}
In the following sections, we will derive our method based on \eqref{eq:flow with B} along with Dirichlet or Neumann boundary conditions on $\partial\Omega$
$$	u = h \quad\text{or}\quad
	-\frac{\kappa}{\mu}\cdot \frac{\partial u}{\partial n} = f. $$

Throughout this paper, we assume $c$, $\phi$ $\mu$ and $B^{\circ}$ are constants. \eqref{eq:FVF} can then be reformulated as 
\begin{equation}\label{eq:PDE}
	b \frac{\partial u}{\partial t}-\nabla\cdot(\kappa(x)\alpha(u)\nabla u) = q	
\end{equation}
where $$\displaystyle b = \frac{\phi c}{B^{\circ}}$$ is a constant, while $$\displaystyle \alpha(u) = \frac{1}{\mu}\cdot(1+c(u-u^{0}))$$ is a field map in $u$.
\subsection{Fine-scale spatial discretization}
For flow in a fractured and vuggy media, the multiscale flow problem described in \eqref{eq:PDE} becomes more complicated as the fractures and vugs have very different hydraulic properties from its background matrix. They can bring in extra transfer and storage mechanics to the flow.
The fractures amplify the complexity of modeling as they can have a wide range of scales and topology. In order to delicately model the their effects on flow, we apply a hierarchical approach. Fractures that have only local effects can be resolved by fine mesh. Thus, the computational domain can be partitioned into
\begin{equation}\label{eq:domain}
	\Omega = \Omega_M\bigoplus_sd_s\Omega_{F,s}\text{, }
\end{equation}
where $M$ and $F$ correspond to matrix and fracture regions respectively. $\Omega_{F,s}$ is a 1-dimensional region that represents one resolved fracture with an aperture $d_i$.
Those fractures that have global effects and are not resolved by mesh can later be handled by representing them as one continua. So are the effects of vugs, which will be discussed in details in next section.
For a fine-scale approximation of $p$, we discretize the PDE on a fine grid, and apply Finite Element Method as well as DFM. Specifically, all integrations in the weak form of \eqref{eq:PDE}, will now be taken separately in both 
$\Omega_M$ and $\Omega_{F,s}$ with distinct hydraulic parameters. To compromise 
arbitrary fractures $\Omega_{F,s}$, one need to adopt an unstructured 
fine-scale mesh. The resulting semi-discrete numerical system is 
\begin{equation}\label{eq:fine_bilinear}
\begin{split}
	&\int_{\Omega_M} b_M\frac{\partial u_h}{\partial t}v_h\ dx + \sum_{s}\int_{\Omega_{F,s}} b_{F,s}\frac{\partial u_h}{\partial t}v_h\ dx \\
	 &+ \int_{\Omega_M}\kappa_M(x) \alpha(p_h)\nabla u_h\nabla v_h\ dx
	 + \sum_s\int_{\Omega_{F,s}}\kappa_{F,s}(x) \alpha(u_h)\nabla_F u_h\nabla_F v_h\ dx\\
	 &=\int_{\Omega}fv_h\ dx. \qquad
\end{split} 
\end{equation}
Here, $v_h$ is a standard FEM basis function. $\nabla_F$ means taking directional derivative along the degenerated fracture $\Omega_{F,i}$.
Note that the aperture effects are considered in $\kappa_{F,s}(x)$.
$$b_M =\frac{\phi_M\ c}{B^{\circ}} \quad b_{F,s} =\frac{\phi_{F,s}\ c}{B^{\circ}} $$
are again constants.
\section{Multi-continuum Model}\label{sec_multicontinuum}
To explicitly represent the global effects of unresolved fractures, vugs and matrix, we introduce the multi-continuum methods. We consider the media as a coupled system of three parallel continua: matrix, unresolved fractures(usually natural fractures), and vugs. They coexist everywhere in our computational domain, while they interact with each other via mass transfer (see Figure \ref{fig:triple-continuum} for an illustration). 
\begin{figure}
	\centering
\includegraphics[width = 0.3\textwidth]{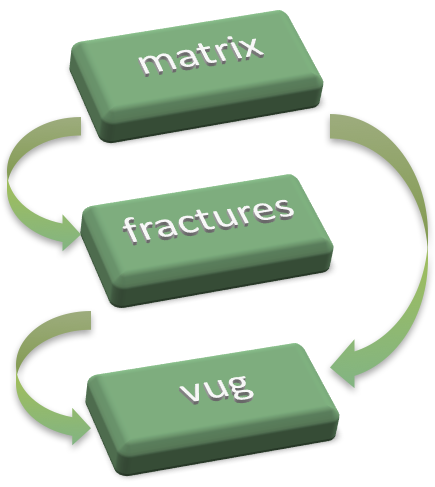}
\caption{Illustration of triple-continuum model}
\label{fig:triple-continuum}
\end{figure}
Without of loss of generality, we assume that all continuum interact with each other. If we denote the flow pressure for continua $i$ as $u^i$, and write the interaction between continua $i$ and $j$ as $Q^{i,j}$, we can then establish a system of PDE following \eqref{eq:PDE} to describe the flow mechanism in each continua.
\begin{equation}\label{eq:multi-con}
		b^i \frac{\partial u^i}{\partial t}-\nabla\cdot(\kappa^i(x)\alpha(u^i)\nabla u^i) = f^i-\sum_{j\neq i}Q^{i,j}	
\end{equation}
and
$$b^i = \frac{\phi^i c}{B^{\circ}}.$$
Here $i$ can be $m$, $f$,or $v$ which stands for matrix, unresolved fractures and vugs respectively.
We further assume that there is no intra-flow inside the vugs and all vugs only 
act as a storage in this system. That is to say, we only consider the case 
when all vugs are independent from each other. Mass transfer due to 
inflow of liquid along vugs can be disregarded in any element of the domain. 
Therefore, mass balance equation for vugs can be written as 
\begin{equation}\label{eq:vug_eq}
	b^v\frac{\partial u^v}{\partial t} = f^v + Q^{f,v}+ Q^{m,v}.
\end{equation}
The term $Q^{i,j}$ represents the mass transfer from continua $i$ to continua $j$. This transfer can be modeled using \cite{warren1963behavior,thomas1983fractured}
$$Q^{i,j} = \sigma\frac{\kappa^{i,j}}{\mu}(u^i-u^j) = q^{i,j}(u^i-u^j),$$
where $q^{i,j} = q^{j,i}$. 
Here $\sigma$ is a shape factor, and $\kappa^{i,j}$ is taken as harmonic mean 
of the permeability $\kappa^i$ and $\kappa^j$.

With \eqref{eq:multi-con} and \eqref{eq:vug_eq}, we can derive the weak formulation of our proposed triple-continuum system. For matrix and unresolved fracture, we have
\begin{equation}\label{eq:mf_variational}
\int_{\Omega} b^{i}\frac{\partial u^i}{\partial t}v^i\ dx+\int_\Omega 
\kappa^i(x)\alpha(u^i)\nabla u^i\nabla v^i\ dx +\sum_{j\neq 
i}\int_{\Omega}Q^{i,j}v^i\ dx= \int_\Omega f^iv^idx, \quad i = m, f.
\end{equation}
For vugs, we have
\begin{equation}\label{eq:v_variational}
	\int_{\Omega} b^{v}\frac{\partial u^v}{\partial t}v^v\ 
dx-\int_{\Omega}Q^{m,v}v^v\ dx - \int_{\Omega}Q^{f,v}v^v\ dx= \int_\Omega 
f^vv^vdx. 
\end{equation}
Here, $v^i$ is any testing function in the same space as $u^i$. We mention that 
equation of $u^m$, $u^f$ and $u^v$ are coupled through term $Q^{i,j}$, thus this 
coupled system should be solved on a Cartesian product space $(u^m,u^f,u^v)\in 
V^m\times V^f\times V^v$. 
In our proposed approach, we take $V^i = H_0^1(\Omega)$ for all continuum $i$.

To express effects of both unresolved and resolved fractures on flow dynamics, we manage to incorporate DFM when solving this multicontinuum equation 
system \eqref{eq:mf_variational}--\eqref{eq:v_variational}. Like what we have in \eqref{eq:fine_bilinear}, we assume $\Omega_{F,s}$ corresponds to a 1-D domain 
that serves as a resolved fracture region. All integrations on $\Omega$
is thus rewritten as $\int_{\Omega_m} +\sum_s\int_{\Omega_{F,s}}$. For example, \eqref{eq:mf_variational} can be rewritten as
\begin{equation}\label{eq:DFN_multi_con}
\begin{split}
	&\int_{\Omega_M}b^i \frac{\partial u^i}{\partial t}v^i\ dx + \sum_s	\int_{\Omega_{F,s}}b_{F,s} \frac{\partial u^i}{\partial t}v^i\ dx\\
	+&\int_{\Omega_M}\kappa^i(x)\alpha(u^i)\nabla u^i\nabla v^i\ dx + \sum_s\int_{\Omega_{F,s}}\kappa_{F,s}(x)\alpha(u^i)\nabla_F u^i\nabla_F v^i\ dx\\
	+&\sum_j\int_{\Omega}Q^{i,j}v^i\ dx = \int_{\Omega}f^iv^i\ dx,\qquad\qquad i = m, f,
\end{split}
\end{equation}
after applying DFM to its original form. Similarly, incorporating DFM in \eqref{eq:v_variational} yields 
\begin{equation}\label{eq:DFN_multi_con_v}
	\int_{\Omega_M}b^v \frac{\partial u^v}{\partial t}v^v\ dx + \sum_s	\int_{\Omega_{F,s}}b_{F,s} \frac{\partial u^v}{\partial t}v^v\ dx - \int_{\Omega}Q^{m,v}v^v\ dx  - \int_{\Omega}Q^{f,v}v^v \ dx= \int_{\Omega}f^vv^v\ dx.
\end{equation}
The fine-scale FEM solution $(u^m, u^f, u^v)$ should be sought in $V_h = V_h^m\times V_h^f\times V_h^v$, where $\{V_h^i \}$, 
where the $V_h^i$ is a conforming finite element space of the continuum $i$ on a fine partition $\mathcal{T}^h$ of domain. 
We also remark that the shape factor $\sigma$ is taken to be proportional to $h^{-2}$. 

For the purpose of simpler notations in the analysis presented in Appendix~\ref{sec:proofs}, 
we rewrite the derived system \eqref{eq:DFN_multi_con}-- \eqref{eq:DFN_multi_con_v} in a more general 
$N$-continuum setting. First, we denote the Sobolev space $V = [H_0^1(\Omega)]^N$. 
On each continuum, given a fixed $w^i \in H_0^1(\Omega)$, we define bilinear forms: 
\begin{equation}
\begin{split}
b^i(u^i,v^i) & = \int_{\Omega_M}b^i u^i v^i \ dx + \sum_s \int_{\Omega_{F,s}}b_{F,s} u^i v^i\ dx, \\
{a}^i(u^i,v^i;w^i) & = \int_{\Omega_M}\kappa^i\alpha(w^i) \nabla u^i\cdot\nabla v^i\ dx + \sum_s \int_{\Omega_{F,s}}\kappa_{F,s}\alpha(w^i) \nabla_F u^i\cdot\nabla_F v^i \ dx.
\end{split}
\end{equation}
Given a fixed $w \in V$, we further define the following coupled bilinear forms on $V$
\begin{equation}
\begin{split}
b(u,v) & = \sum_{i}b^i(u^i,v^i), \\
a(u,v;w) & = \sum_{1 \leq i < N} a^i(u^i, v^i; w^i), \\
q(u,v) & = \sum_{i}\sum_{j\neq i} q^{i,j} \int_{\Omega}(u^i-u^j)v^i\ dx.
\end{split}
\end{equation}
Then the weak formulation \eqref{eq:DFN_multi_con}-- \eqref{eq:DFN_multi_con_v} can be written as: 
find $u = (u^1,u^2, \cdots, u^N)$, where $u(t,\cdot)\in V$,
 such that for all $v = (v^1,v^2, \cdots, v^N)$, where $v(t,\cdot)\in V$,
\begin{equation}\label{eq:weak}
	b\left(\frac{\partial u}{\partial t}, v\right) + a(u, v;u) + q(u,v) = (f,v),\qquad t \in(0,T),
\end{equation}
with $N=3$ continua and the continuum indices representing the matrix, fracture and vug components in order.

%
%
%

\section{GMsFEM}\label{sec_GMsFEM}
 In order to reduce the computational cost, we would like to solve the equation system \eqref{eq:multi-con} and \eqref{eq:vug_eq} on coarse mesh. However, permeability coefficient $\kappa(x)$ is heterogeneous in space, thus a standard FEM solution on coarse mesh will be inaccurate as it loses subgrid information. Therefore, we use GMsFEM to construct multiscale basis that contains local heterogeneous permeability information. By replacing the standard FEM basis with GMsFEM basis, we are able to obtain a better accuracy and sustain an affordable computational cost.
 
 In this section, we briefly review the procedure for GMsFEM. Roughly speaking, the construction of GMsFEM basis consists of two stages: solving snapshot problems and conducting spectral decomposition. Both steps are conducted locally.
 \subsection{Coarse and Fine Mesh} 
We first introduce the coarse grid $\mathcal{T}^H$ with mesh size $H$. And each coarse block in $\mathcal{T}^H$ can be denoted as $K_j$. $\mathcal{T}^H$ can be further refined by an unstructured fine mesh $\mathcal{T}^h$ with mesh size $h\ll H$. See Figure \ref{fig:mesh} for an illustration. We assume $\mathcal{T}^h$ is fine enough to resolve all underlying fine-scale properties of $\kappa(x)$. Let $\{x_i| 1\leq i\leq N_v\}$ be the set of all coarse nodes of $\mathcal{T}^H$, where $N_v$ is the total number of coarse nodes. We then define the coarse neighborhood $\omega_i$ of node $x_i$ as $$\omega_i = \bigcup_j \{K_j|x_i\in K_j\}.$$

\begin{figure}
	\centering
	\includegraphics[width = \textwidth]{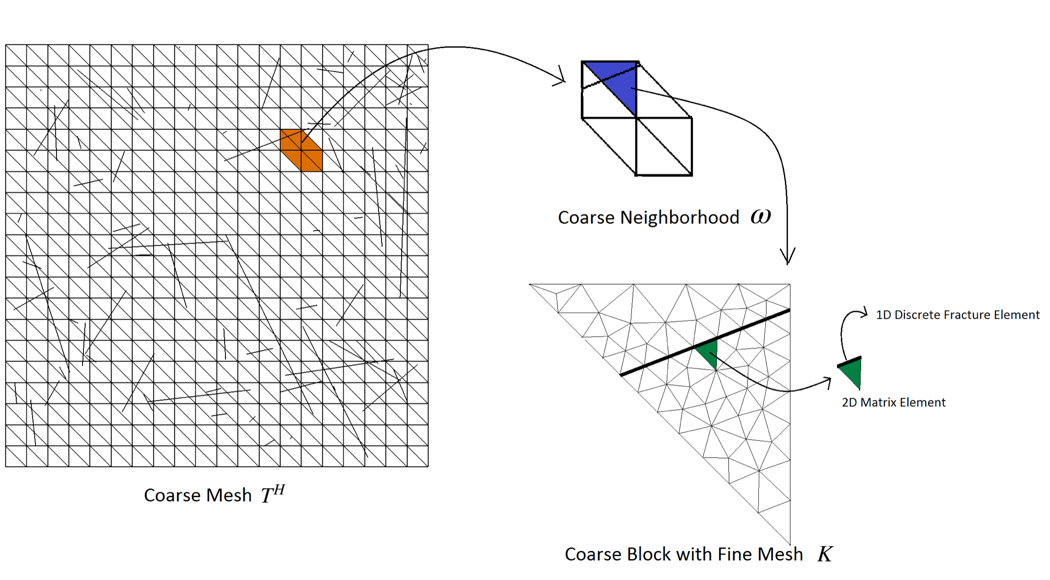}
	\caption{Coarse \& Fine Mesh. Left: coarse mesh with discrete fractures; Upper Right: A coarse neighborhood; Lower Right: A coarse block with a discrete fracture and refined mesh.}
	\label{fig:mesh}
\end{figure}
 
 \subsection{Snapshot Space}
 A snapshot space is an auxiliary space constructed within each coarse neighborhood $w_i$. We omit the subscript $i$ for simplicity. There are a few different ways of constructing snapshot space \cite{efendiev2013generalized}. In this paper, we take solutions to the following harmonic extension problems as snapshot basis functions of three coninuum. The snapshot space is exactly the span of all such basis functions.
 
The snapshot problems are designed analogue to the steady state equation of \eqref{eq:multi-con} and \eqref{eq:vug_eq}. 
We consider a \emph{coupled} snapshot system in a coarse neighborhood $\omega$, in which 
we find $\phi_{k,s}^{\text{snap},\omega} = \left(\phi_{k,s}^{m,\text{snap},\omega}, 
\phi_{k,s}^{f,\text{snap},\omega}, \phi_{k,s}^{v,\text{snap},\omega}\right) \in V_h$ such that 
 \begin{equation}\label{eq:snap_coupled}
 \begin{split}
-\nabla\cdot{\frac{\kappa^i(x)}{\mu}\nabla\phi^{i,\text{snap},\omega}_{k,s}} +\sum_{j \neq i} q^{i,j}(\phi^{i,\text{snap},\omega}_{k,s} - \phi^{j,\text{snap},\omega}_{k,s})&=0  \qquad \text{in } \omega \quad i = m,f, \\
\sum_{j \neq v} q^{v,j}(\phi^{v,\text{snap},\omega}_{k,s} - \phi^{j,\text{snap},\omega}_{k,s})  &= 0 \qquad \text{in } \omega,\\
{\phi}^{\text{snap},\omega}_{k,s} &= {\delta}_{k,s} 
\quad  \text{ on } \partial\omega.	
 \end{split}
 \end{equation}
${\delta}_{k,s}$ is defined on all fine-scale nodes of $\partial\omega$. 
If the set $\{x^\omega_i| 1\leq i \leq N^{\omega}_v\}$ represents all fine-scale nodes on boundary, we have 
 $$\delta_{k,s}(x^{\omega}_i) = \left\{
 \begin{array}{ll}
 {e}_s      &  \quad i=k,\\
 {0}      & \quad i\neq k.
 \end{array}
 \right. $$ 
 Here, $\{{e}_s\}_{s=1}^3$ is standard basis in $\mathbb{R}^3$.
 So far, we have constructed the local snapshot space as:
$$V_{\text{snap}}^{\omega} = \text{span}\{{\phi}^{\text{snap},\omega}_{k,s} \ |\ 1\leq k\leq N^{\omega}_v, 1 \leq s\leq 3\}.$$
The global snapshot space is defined as the sum of all local snapshot spaces, i.e. 
$$V_{\text{snap}}= \text{span}\{{\phi}^{\text{snap},\omega_i}_{k,s} \ |\ 1 \leq i \leq N_v, 1\leq k\leq N^{\omega_i}_v, 1 \leq s\leq 3\}.$$
 \emph{Remark } When solving local snapshot problem \eqref{eq:snap_coupled} on the fine mesh within $\omega$, one should also apply the idea of DFM and replace all integral $\int_{\omega}$ by $\int_{\omega_M}+\sum_s\int_{\omega_{F,s}}$ and all coefficient correspondingly.
 \subsection{Spectral Problem}
To further reduce the dimension of resulting system, we conduct a spectral decomposition on $V_{\text{snap}}^{\omega}$. Such decomposition will automatically detect the dominant modes. More precisely, 
we sought eigenpairs $(\lambda^\omega_k,\psi^\omega_k) \in \mathbb{R} \times V_{\text{snap}}^{\omega}$ 
for the following local spectral problem
\begin{equation}\label{eq:spectral}
	a_\omega(\psi^\omega_k, v) = \lambda^\omega_k s_\omega(\psi^\omega_k,v) \qquad \forall v \in  V_{\text{snap}}^{\omega}, 
\end{equation}
where
\begin{equation*}
\begin{split}
a_\omega(u,v) & = \sum_{i \in \{m,f\}}a_\omega^i(u^i,v^i) +\sum_{i}\sum_{j\neq i}\int_{\omega}q^{i,j}(u^i-u^j)v^i\ dx,\\
s_\omega(u,v) & =\frac{1}{\mu}\sum_{i}\int_{\omega}\kappa^i(x) u^i v^i\ dx.
\end{split}
\end{equation*}
The form of $a_\omega(u,v)$ and $s_\omega(u,v)$ are inspired by analysis which will 
be demonstrated in next section along with Appendix~\ref{sec:proofs}.
We sort the eigenvalues $\{\lambda^\omega_k\}$ of \eqref{eq:spectral} in ascending order, 
and we take the first $L_\omega$ eigenfunctions $\psi^\omega_k = (\psi^{m,\omega}_k, 
\psi^{f,\omega}_k, \psi^{v,\omega}_k)$.
Then the $k$-{th} multiscale basis function
$\psi^{\omega}_{k,ms} = (\psi^{m,\omega}_{k,ms},\psi^{f,\omega}_{k,ms}, \psi^{m,\omega}_{k,ms})$ in $\omega$ is defined by
$$\psi^{i,\omega}_{k,ms} = \chi^{\omega}\psi^{i,\omega}_k, \quad i = m,f,v,$$
where $\chi^{\omega}$ is a partition of unity function for coarse grid $\mathcal{T}^H$ on a coarse neighborhood $\omega$. 
By multiplying $\chi^{\omega}$, we obtained a set of conforming multiscale basis functions supported in $\omega$. 
Using the multiscale basis functions $\{\psi^{\omega_i}_{k,ms}\}$ for all coarse regions $\omega_i$, 
we construct the multiscale space
$$V_{\text{ms}} = \text{span}\{\psi^{\omega_i}_{k,ms} \ |\  1\leq i\leq N_v,1\leq k\leq L_{\omega_i}\} .$$ 
We remark that $\text{dim } V_{\text{ms}} \ll \text{dim } V_h$, where $V_h = V_h^m \times V_h^f \times V_h^v$ is the standard FEM approximation space on $\mathcal{T}^h$. When the multiscale space is established, we can find a coarse-scale solution on $V_{\text{ms}}$ with less computational effort.

Once the multscale space is constructed, 
the GMsFEM solution is given by: find $u_{\text{ms}}= (u_{\text{ms}}^1,u_{\text{ms}}^2, \cdots, u_{\text{ms}}^N)$, 
where $u_{\text{ms}}(t,\cdot) \in V_{\text{ms}}$, 
such that for all $v = (v^1,v^2, \cdots, v^N)$, where $v(t,\cdot)\in V_{\text{ms}}$,
\begin{equation}\label{eq:ms}
	b\left(\frac{\partial u_{\text{ms}}}{\partial t}, v\right) +  a(u_{\text{ms}}, v;u_{\text{ms}}) + q(u_{\text{ms}},v) = (f,v),\qquad t \in(0,T).
\end{equation}

\subsection{A-priori error estimates}
In this section, we present some a-priori error estimates 
of the semi-discrete problem. 
The proofs of these estimates 
will be left to Appendix~\ref{sec:proofs}.

We suppose the field $\kappa$ has a upper bound $\kappa^+$ and a lower bound $\kappa^-$ on $\Omega$.
We further assume that the fields $\alpha(u^i)$ and $\alpha(u^i_{\text{ms}})$ 
has a uniform upper bound $\alpha^+$ and a uniform lower bound $\alpha^-$, i.e.
\begin{equation}\label{eq:a_bound}
0< \alpha^- \leq \alpha(u^i), \alpha(u^i_{\text{ms}}) \leq \alpha^+.
\end{equation}
Next, we introduce some metrics on $V$. 
The bilinear form $b(\cdot,\cdot)$ can further induce a norm
$$\|u\|_b = (b(u,u))^{1/2}.$$
We also define a norm $\|\cdot\|_{a_Q}$ by
\begin{equation}
\| u \|_{a_Q} = ( \vert u \vert_a^2 + \vert u \vert_q^2 )^\frac{1}{2}, 
\end{equation}
where
\begin{equation}
\begin{split}
\vert u \vert_a^2 & = \sum_{1 \leq i < N} \left( \int_{\Omega_M}\kappa^i \vert \nabla u^i\vert^2 \ dx + 
\sum_s \int_{\Omega_{F,s}}\kappa_{F,s} \vert \nabla_F u^i\vert^2 \ dx\right),  \\
\vert u \vert_q^2 & = q(u,u).
\end{split}
\end{equation}

The first theorem provides an estimate of 
the error between the weak solution $u$ and the multiscale solution $u_{\text{ms}}$ 
by the projection error of $u$ onto the multiscale space $V_{\text{ms}}$ in various metrics.
\begin{theorem}
	Let $u$ be the weak solution in \eqref{eq:weak} and 
	$u_{\text{ms}}$ be the multiscale numerical solution in \eqref{eq:ms}. 
	Assume $\nabla u \in L^4(\Omega_M)$ and $\nabla_F u \in L^\infty(\Omega_{F,s})$. 
	Then we have 
	\begin{equation}\label{eq:lemma1}
	\begin{split}
	&\|u(t,\cdot) - u_{\text{ms}}(t,\cdot)\|_b^2 +\int_0^T\|u-u_{\text{ms}}\|_{a_Q}^{2}\ dt \\
	&\leq\ C \inf_{w\in V_{\text{ms}}}(\int_0^T \|\frac{\partial(w-u)}{\partial t}\|_b^2\ dt + \int_0^T\|w-u\|_{a_Q}^2\ dt + \|w(0, \cdot) - u(0,\cdot)\|_b^{2}).
	\end{split}
	\end{equation}
\label{thm1}
\end{theorem}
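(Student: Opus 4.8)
The plan is to run a standard Galerkin energy argument adapted to the quasilinear coefficient $\alpha(\cdot)$. Since $V_{\text{ms}}\subset V$, subtracting \eqref{eq:ms} from \eqref{eq:weak} and restricting the test function to $V_{\text{ms}}$ gives the error identity
\begin{equation*}
b\!\left(\frac{\partial(u-u_{\text{ms}})}{\partial t},v\right) + \bigl(a(u,v;u)-a(u_{\text{ms}},v;u_{\text{ms}})\bigr) + q(u-u_{\text{ms}},v) = 0,\qquad v\in V_{\text{ms}}.
\end{equation*}
Fix an arbitrary $w\in V_{\text{ms}}$ and decompose $u-u_{\text{ms}}=\rho+\theta$ with $\rho=u-w$ and $\theta=w-u_{\text{ms}}\in V_{\text{ms}}$. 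Choosing $v=\theta$: the time term becomes $\tfrac{1}{2}\tfrac{d}{dt}\|\theta\|_b^2-b(\partial_t\rho,\theta)$ because $b(\cdot,\cdot)$ is symmetric and positive; the coupling term becomes $q(\theta,\theta)+q(\rho,\theta)$, where the symmetry $q^{i,j}=q^{j,i}$ makes $q(\cdot,\cdot)$ a symmetric positive semidefinite form, so that $q(\theta,\theta)=\vert\theta\vert_q^2\ge 0$ is kept on the left and $\vert q(\rho,\theta)\vert\le\vert\rho\vert_q\vert\theta\vert_q$ is handled by Young's inequality.

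The heart of the proof is the nonlinear difference $a(u,\theta;u)-a(u_{\text{ms}},\theta;u_{\text{ms}})$. Because $\alpha$ is affine, $\alpha(u^i)-\alpha(u_{\text{ms}}^i)=\tfrac{c}{\mu}(u^i-u_{\text{ms}}^i)$, and hence on $\Omega_M$ (and analogously with $\nabla_F$ on each $\Omega_{F,s}$)
\begin{equation*}
\alpha(u^i)\nabla u^i-\alpha(u_{\text{ms}}^i)\nabla u_{\text{ms}}^i = \alpha(u_{\text{ms}}^i)\,\nabla(\rho^i+\theta^i) + \frac{c}{\mu}(\rho^i+\theta^i)\,\nabla u^i .
\end{equation*}
This splits the difference into three parts. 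The part pairing $\alpha(u_{\text{ms}}^i)\nabla\theta^i$ with $\nabla\theta^i$ is coercive: by \eqref{eq:a_bound} it is bounded below by $\alpha^-\vert\theta\vert_a^2$, so it moves to the left-hand side. The part with $\alpha(u_{\text{ms}}^i)\nabla\rho^i$ is controlled by Cauchy--Schwarz and Young against $\vert\rho\vert_a^2$ and $\epsilon\vert\theta\vert_a^2$. The genuinely nonlinear remainder $\tfrac{c}{\mu}\sum_i\int_{\Omega_M}\kappa^i(\rho^i+\theta^i)\nabla u^i\cdot\nabla\theta^i$ plus its fracture counterpart is exactly where the regularity hypotheses enter: on $\Omega_M$ I would apply H\"older with exponents $(4,4,2)$ using $\nabla u\in L^4(\Omega_M)$, then bound $\|\theta^i\|_{L^4(\Omega_M)}$ via the two-dimensional Ladyzhenskaya/Gagliardo--Nirenberg inequality $\|\theta^i\|_{L^4}^2\le C\|\theta^i\|_{L^2}\|\nabla\theta^i\|_{L^2}$; on each $\Omega_{F,s}$ use $\nabla_F u\in L^\infty(\Omega_{F,s})$ together with a one-dimensional trace/interpolation bound for $\theta^i$. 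After Young's inequality, each piece splits into $\epsilon\|\theta\|_{a_Q}^2$, into terms in $\rho$ alone, and into a term $C\bigl(\|\nabla u\|_{L^4(\Omega_M)}^4+\sum_s\|\nabla_F u\|_{L^\infty(\Omega_{F,s})}^4\bigr)\|\theta\|_b^2$ whose coefficient is, under the hypotheses, integrable in time.

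Assembling these estimates, choosing $\epsilon$ small enough that all the $\epsilon\|\theta\|_{a_Q}^2$ contributions are absorbed by the coercive lower bound $\alpha^-\vert\theta\vert_a^2+\vert\theta\vert_q^2$, and using Young on $b(\partial_t\rho,\theta)$ and $q(\rho,\theta)$, I arrive at a differential inequality
\begin{equation*}
\frac{d}{dt}\|\theta\|_b^2 + c\,\|\theta\|_{a_Q}^2 \le g(t)\,\|\theta\|_b^2 + C\left(\left\|\frac{\partial\rho}{\partial t}\right\|_b^2 + \|\rho\|_{a_Q}^2\right),\qquad g\in L^1(0,T).
\end{equation*}
Integrating on $(0,t)$, applying Gr\"onwall's inequality to $\|\theta(t)\|_b^2$, and then feeding the resulting bound for $\sup_t\|\theta(t)\|_b^2$ back into the integrated inequality at $t=T$ controls both $\|\theta(t,\cdot)\|_b^2$ and $\int_0^T\|\theta\|_{a_Q}^2\,dt$ by $\|\theta(0,\cdot)\|_b^2+\int_0^T(\|\partial_t\rho\|_b^2+\|\rho\|_{a_Q}^2)\,dt$. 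Finally, the triangle inequality $\|u-u_{\text{ms}}\|\le\|\rho\|+\|\theta\|$ in both the $\|\cdot\|_b$ and $\|\cdot\|_{a_Q}$ metrics (with a Poincar\'e inequality to dominate $\|\rho\|_b$ by $\|\rho\|_{a_Q}$ on $\Omega_M$ and each $\Omega_{F,s}$), together with choosing the discrete initial datum $u_{\text{ms}}(0,\cdot)$ as a projection of $u(0,\cdot)$ so that $\|\theta(0,\cdot)\|_b\lesssim\|w(0,\cdot)-u(0,\cdot)\|_b$, yields \eqref{eq:lemma1} after taking the infimum over $w\in V_{\text{ms}}$. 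I expect the main obstacle to be the nonlinear term $\int_{\Omega_M}\kappa(\rho+\theta)\nabla u\cdot\nabla\theta$: one must resist substituting $\nabla u_{\text{ms}}=\nabla u-\nabla\rho-\nabla\theta$, which would generate an uncontrollable cubic term $\int\theta\,\vert\nabla\theta\vert^2$, and instead keep $\nabla u$, for which the $L^4$ bound is available; matching the exponents in the Ladyzhenskaya and Young inequalities so that the leftover coefficient multiplies $\|\theta\|_b^2$ (hence is amenable to Gr\"onwall) rather than $\|\theta\|_{a_Q}^2$ is the delicate bookkeeping at the center of the argument.
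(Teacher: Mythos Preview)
Your proposal is correct and follows essentially the same route as the paper: Galerkin orthogonality, test with $\theta=w-u_{\text{ms}}$, split the quasilinear term via the affine structure of $\alpha$ so that the residual is $\int(\alpha(u)-\alpha(u_{\text{ms}}))\kappa\nabla u\cdot\nabla\theta$, handle this on $\Omega_M$ by H\"older $(4,4,2)$ plus Ladyzhenskaya and on $\Omega_{F,s}$ by the $L^\infty$ bound on $\nabla_F u$, then close with Young, Gr\"onwall, the $b$-projection for $u_{\text{ms}}(0,\cdot)$, Poincar\'e, and the triangle inequality. The paper carries out exactly these steps, with the only cosmetic difference that it uses the integrating factor $e^{-Kt}$ in place of an explicit Gr\"onwall lemma and treats the Gr\"onwall coefficient as a constant rather than your slightly more general $g\in L^1(0,T)$.
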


In light of Theorem~\ref{thm1}, we have to establish an estimate of the projection error of $u$ 
onto the multiscale space $V_{\text{ms}}$ in various metrics on the right hand side of \eqref{eq:lemma1}, 
in order to complete the convergence analysis. 
With the assumption that the irreducible error between the Sobolev space $V$ 
and the snapshot space $V_\text{snap}$ is small, which holds when a sufficiently large number of 
snapshot solutions is taken, 
we define an approximation $u_{\text{snap}}(\cdot,t) \in V_{\text{snap}}$ of $u(\cdot,t)$ in the snapshot space by
\begin{equation}
\label{eq:u_snap}
u_{\text{snap}}(x,t) = \sum_{i=1}^{N_v} \sum_{k=1}^{N_v^{\omega_i}} \sum_{s=1}^3 
u(x_k,t) \chi^{\omega_i}(x_k) \phi^{\text{snap},\omega_i}_{k,s}(x),
\end{equation} 
and provide an estimate of the projection error of $u_{\text{snap}}$ onto the snapshot space $V_{\text{ms}}$. 

\begin{theorem}\label{thm2}
	Let $u$ and $u_{\text{snap}}$ be reference solution and snapshot projection of $u$ as defined in \eqref{eq:weak} and \eqref{eq:u_snap}. 
	Then we have
	\begin{equation}
	\begin{split}
	\inf_{w \in V_{\text{ms}}} \int_0^T\|\frac{\partial(w-u_{\text{snap}})}{\partial t}\|_b^2\ dt& + \int_0^T\|w-u_{\text{snap}}\|_{a_Q}^2\ dt +\|w(0,\cdot)-u_{\text{snap}}(0,\cdot)\|_b^2\\
	&\leq\frac{C}{\Lambda} (\int_0^T\|\frac{\partial u}{\partial t}\|_{a_Q}^2\ dt + \int_0^T\|u\|_{a_Q}^2\ dt + \|u(0,\cdot)\|_{a_Q}^2)
	\end{split}
	\end{equation} 
	with
	$$\Lambda = \min_{j}\{\lambda^{\omega_j}_{L_{\omega_j}+1}\}.$$
\end{theorem}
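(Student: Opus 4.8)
The plan is to produce one explicit $w\in V_{\text{ms}}$ that makes the three terms on the left small, and then invoke the infimum. The candidate is obtained by local spectral truncation. Writing the snapshot projection \eqref{eq:u_snap} as $u_{\text{snap}}=\sum_{i=1}^{N_v}\chi^{\omega_i}U_i$, where $U_i\in V_{\text{snap}}^{\omega_i}$ is the coupled harmonic extension of \eqref{eq:snap_coupled} carrying the fine nodal boundary data of $u$ on $\partial\omega_i$ (note that $\chi^{\omega_i}$ vanishes on $\partial\omega_i$, so $u_{\text{snap}}$ is conforming), expand each $U_i$ in the eigenbasis of \eqref{eq:spectral}, normalized by $s_{\omega_i}(\psi^{\omega_i}_k,\psi^{\omega_i}_\ell)=\delta_{k\ell}$ and $a_{\omega_i}(\psi^{\omega_i}_k,\psi^{\omega_i}_\ell)=\lambda^{\omega_i}_k\delta_{k\ell}$, say $U_i=\sum_k c_{ik}\psi^{\omega_i}_k$, and set
\[
w:=\sum_{i=1}^{N_v}\chi^{\omega_i}\Big(\sum_{k\le L_{\omega_i}}c_{ik}\,\psi^{\omega_i}_k\Big)\in V_{\text{ms}}.
\]
Then $u_{\text{snap}}-w=\sum_i\chi^{\omega_i}e_i$ with $e_i:=\sum_{k>L_{\omega_i}}c_{ik}\psi^{\omega_i}_k$ the spectral tail, and, since the snapshot extension and the truncation are linear and time-independent, $\partial_t(u_{\text{snap}}-w)=\sum_i\chi^{\omega_i}\partial_t e_i$ where $\partial_t e_i$ is the spectral tail of the coupled harmonic extension $\partial_t U_i$ of $\partial_t u|_{\partial\omega_i}$.

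The engine is the spectral gap: for $\xi=\sum_{k>L_{\omega_i}}d_k\psi^{\omega_i}_k$ one has $s_{\omega_i}(\xi,\xi)=\sum_{k>L_{\omega_i}}d_k^2\le(\lambda^{\omega_i}_{L_{\omega_i}+1})^{-1}\sum_{k>L_{\omega_i}}\lambda^{\omega_i}_kd_k^2=(\lambda^{\omega_i}_{L_{\omega_i}+1})^{-1}a_{\omega_i}(\xi,\xi)\le\Lambda^{-1}a_{\omega_i}(\xi,\xi)$. Combining this with $a_{\omega_i}$-orthogonality of the eigenbasis, which gives $a_{\omega_i}(e_i,e_i)\le a_{\omega_i}(U_i,U_i)$, and the energy-minimizing property of the snapshot extension (the coupled system \eqref{eq:snap_coupled} is the Euler--Lagrange system of the symmetric positive semidefinite form $a_{\omega_i}$ with prescribed Dirichlet data, the cross-continuum terms assembling into $\sum_{\{i',j\}}q^{i',j}\int_{\omega_i}(u^{i'}-u^j)^2\ge 0$), so that $a_{\omega_i}(U_i,U_i)\le C\|u\|_{a_Q,\omega_i}^2$ (with $\|\cdot\|_{a_Q,\omega_i}$ the local analogue of $\|\cdot\|_{a_Q}$ on $\omega_i$, $C$ absorbing $\mu$ and the bounds $\alpha^{\pm}$ of \eqref{eq:a_bound}, up to the assumed-small snapshot irreducibility error), one obtains the two local bounds $s_{\omega_i}(e_i,e_i)\le C\Lambda^{-1}\|u\|_{a_Q,\omega_i}^2$ and $a_{\omega_i}(e_i,e_i)\le C\|u\|_{a_Q,\omega_i}^2$, with the analogous bounds for $\partial_t e_i$ in terms of $\|\partial_t u\|_{a_Q,\omega_i}^2$.

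It remains to globalize. Using $0\le\chi^{\omega_i}\le 1$, $\|\nabla\chi^{\omega_i}\|_{L^\infty}\le CH^{-1}$, $\kappa^-\le\kappa\le\kappa^+$ (hence $\|\cdot\|_{L^2(\omega_i)}^2\le Cs_{\omega_i}(\cdot,\cdot)$ and $\|\cdot\|_b^2\le C\|\cdot\|_{L^2}^2$), and the bounded overlap of $\{\omega_i\}$: for the $b$-type quantities, $\|\chi^{\omega_i}e_i\|_b^2\le C\|e_i\|_{L^2(\omega_i)}^2\le Cs_{\omega_i}(e_i,e_i)\le C\Lambda^{-1}\|u\|_{a_Q,\omega_i}^2$, and similarly with $\partial_t e_i$, so that summing over $i$, integrating over $(0,T)$, and using $\|\cdot\|_{a_Q,\omega_i}\le\|\cdot\|_{a_Q}$ bounds $\int_0^T\|\partial_t(u_{\text{snap}}-w)\|_b^2$ and $\|u_{\text{snap}}(0,\cdot)-w(0,\cdot)\|_b^2$ by $C\Lambda^{-1}$ times the stated right-hand-side terms. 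For the energy quantity, the Leibniz rule gives $\|\chi^{\omega_i}e_i\|_{a_Q}^2\le C\big(\|\nabla\chi^{\omega_i}\|_{L^\infty}^2\|e_i\|_{L^2(\omega_i)}^2+\|e_i\|_{a_Q,\omega_i}^2\big)$; the first piece carries the $\Lambda^{-1}$ gain through $s_{\omega_i}(e_i,e_i)$, and the second piece must likewise be folded into the spectral gain, which is the delicate step discussed next. Once that is done, summing and integrating bounds $\int_0^T\|u_{\text{snap}}-w\|_{a_Q}^2$ by $C\Lambda^{-1}\int_0^T\|u\|_{a_Q}^2$, and since $\inf_{w\in V_{\text{ms}}}(\cdots)$ is at most the value at our particular $w$, Theorem~\ref{thm2} follows.

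The principal obstacle is precisely the energy-norm control of the partition-of-unity cutoff. The spectral tail $e_i$ is quantitatively small, by the factor $\Lambda^{-1}$, only in the weighted $L^2$ norm $s_{\omega_i}$, whereas in energy it is a priori only bounded, $a_{\omega_i}(e_i,e_i)=\sum_{k>L_{\omega_i}}\lambda^{\omega_i}_kc_{ik}^2\le a_{\omega_i}(U_i,U_i)$. Hence the $\Lambda^{-1}$ decay of $\int_0^T\|u_{\text{snap}}-w\|_{a_Q}^2$ cannot be read off term by term; one must exploit the design of the spectral problem \eqref{eq:spectral} --- the pairing of $a_\omega$ against $s_\omega$ that fixes the scaling of the eigenvalues entering $\Lambda$ --- together with the partition-of-unity relation $\sum_i\nabla\chi^{\omega_i}=0$ and an interpolation estimate comparing $u$ on $\omega_i$ with its coarse-scale representative, so that the offending $\nabla\chi^{\omega_i}$ and gradient-of-tail contributions recombine and inherit the gain (any residual $H$-scaling being of the harmless kind absorbed into $C$ in the regime the method is used). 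A secondary but necessary point is the energy-minimization inequality $a_{\omega_i}(U_i,U_i)\le C\|u\|_{a_Q,\omega_i}^2$, which requires the snapshot and spectral bilinear forms to be honest quadratic energies --- hence the role of the symmetry and nonnegativity of the coupling terms and of freezing $\alpha$ to a constant at this level, with the solution-dependent coefficient $\alpha(u^i)$ re-entering only in the global form \eqref{eq:weak} and being controlled through the uniform bounds \eqref{eq:a_bound}.
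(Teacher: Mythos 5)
Your decomposition is the same as the paper's: expand $u_{\text{snap}}$ locally in the eigenbasis of \eqref{eq:spectral}, take $w$ to be the truncation at $L_{\omega_j}$ multiplied by the partition of unity, and treat the three left-hand terms separately. For the two $b$-norm terms (the time-derivative term and the initial-value term) your argument matches the paper's Lemmas on those quantities: bounded overlap plus $\| \cdot \|_b^2 \lesssim s_{\omega_j}(\cdot,\cdot)$ for the tail, the spectral gap $s_{\omega_j}(e^{(j)},e^{(j)}) \leq \Lambda^{-1} a_{Q}^{(j)}(e^{(j)},e^{(j)})$, orthogonality of the eigenfunctions, and the projection/energy-minimization property of $u_{\text{snap}}^{(j)}$ to replace $u_{\text{snap}}$ by $u$. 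That part is fine.

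The gap is exactly at the point you flag as ``the delicate step'' and then do not carry out: the term $\int_{\omega_{j,M}}\kappa^i(\chi^{\omega_j})^2|\nabla e^{(j),i}|^2\,dx$ (plus its fracture and $q$ counterparts), for which the spectral gap gives no smallness, since $a_{Q}^{(j)}(e^{(j)},e^{(j)})$ is only bounded by $a_{Q}^{(j)}(u^{(j)},u^{(j)})$ without any factor of $\Lambda^{-1}$. Your proposed remedy --- invoking $\sum_i \nabla\chi^{\omega_i}=0$ together with an interpolation estimate against a coarse representative of $u$ --- is not the mechanism that closes this, and as sketched it cannot be: the quantity to be controlled is the gradient energy of the spectral \emph{tail} $e^{(j)}$, which has no useful coarse-scale representative, and no partition-of-unity cancellation acts inside a single $\|\chi^{\omega_j}e^{(j)}\|_{a_Q}^2$ term. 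What the paper does instead is prove a Caccioppoli-type cutoff inequality (its Lemma~\ref{lemma2.20}) for functions satisfying the local coupled snapshot equation: testing that equation with $(\chi^{\omega_j})^2 e^{(j)}$ shows that the cutoff energy $\int \kappa (\chi^{\omega_j})^2|\nabla e^{(j)}|^2 + q(\chi^{\omega_j}e^{(j)},\chi^{\omega_j}e^{(j)})$ is bounded by weighted zeroth-order terms $\int \kappa |\nabla\chi^{\omega_j}|^2 |e^{(j)}|^2$ (the source terms being absent for the homogeneous snapshot tail), i.e.\ by $C\,s^{(j)}(e^{(j)},e^{(j)})$ --- and it is this converted quantity that inherits the factor $\Lambda^{-1}$ through the spectral problem, precisely because $s_\omega$ was designed with the weight $\kappa/\mu$. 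Without this step (or an equivalent one exploiting that the tail lies in the snapshot space and hence satisfies the local equation), your bound on $\int_0^T\|w-u_{\text{snap}}\|_{a_Q}^2\,dt$ does not acquire the $\Lambda^{-1}$ factor, and the theorem is not established.
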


\subsection{An implementation view}
In this section, we derive the fully discrete system 
and present the implementation details. 
We adopt the implicit Euler scheme for time discretization to the semi-discrete 
GMsFEM system \eqref{eq:ms}. 
Suppose the time domain $(0,T)$ is partitioned into equal subintervals of length $\Delta t$, 
and denote the $n$-th time instant by $t_n = n\Delta t$. 
Using backward difference, the fully discrete GMsFEM scheme 
is to, successively for $n = 1,2,\ldots,$ find $u_{\text{ms}}^{n} \in V_{\text{ms}}$ such that 
\begin{equation}\label{eq:full_ms}
	b\left(\frac{u^n_{\text{ms}}-u^{n-1}_{\text{ms}}}{\Delta t}, v\right) +  
	a(u^n_{\text{ms}}, v;u^n_{\text{ms}}) + q(u^n_{\text{ms}},v) = (f^n,v) \text{ for all } v \in V_{\text{ms}},
\end{equation}
where the subscript $n$ denotes the evaluation of a time-dependent function at 
the time instant $t_n$ and an initial condition $u_{\text{ms}}^0$ is given. 
At each time instant $t_n$, \eqref{eq:full_ms} gives rise to a nonlinear algebraic system in 
the coefficients with respect to the multiscale basis functions. 
With a sufficiently small time step size, we can adopt a direct linearization approach 
by replacing the field $\alpha(u^n_{\text{ms}})$ by $\alpha(u^{n-1}_{\text{ms}})$ and derive
\begin{equation}\label{eq:linearization}
	b\left(\frac{u^n_{\text{ms}}-u^{n-1}_{\text{ms}}}{\Delta t}, v\right) +  
	a(u^n_{\text{ms}}, v;u^{n-1}_{\text{ms}}) + q(u^n_{\text{ms}},v) = (f^n,v) \text{ for all } v \in V_{\text{ms}}.
\end{equation}
Alternatively, we can use an iterative approach. More precisely, 
we can construct a sequence $\{u^n_{\text{ms},m}\}_{m=0}^\infty \subset V_{\text{ms}}$ 
whose fixed point is the solution $u^n_{\text{ms}}$ and 
truncate the successive iterations when a stopping criterion is fulfilled. 
In this case, we start with an initial guess $u^n_{\text{ms},0} = u^{n-1}_{\text{ms}}$ 
and solve for 
\begin{equation}\label{eq:iteration}
	b\left(\frac{u^n_{\text{ms},m}-u^{n-1}_{\text{ms}}}{\Delta t}, v\right) +  
	a(u^n_{\text{ms},m}, v;u^n_{\text{ms},m-1}) + q(u^n_{\text{ms},m},v) = (f^n,v) \text{ for all } v \in V_{\text{ms}}.
\end{equation}
We remark that it is equivalent to the linearization approach if we stop after one iteration.

\section{Numerical Results}\label{sec_numerical}
In this section, we apply our proposed methods to a realistic fractured and vuggy reservoir. All three continuum have heterogeneous permeability background (see Figure \ref{fig:ms_perm} for the permeability of matrix) and discrete fracture networks are embeded in this reservoir like in Figure \ref{fig:fracture}. An unstructured fine mesh is used to resolve the discrete fractures networks(see Figure \ref{fig:fine_mesh}). The descriptive parameters of this reservoir are listed in Figure \ref{fig:ms_perm} and Table \ref{tab:values}. All numerical results are implemented using FEniCS Library.

%
\begin{figure}[h!]
	\centering
	\includegraphics[width = 0.8\textwidth]{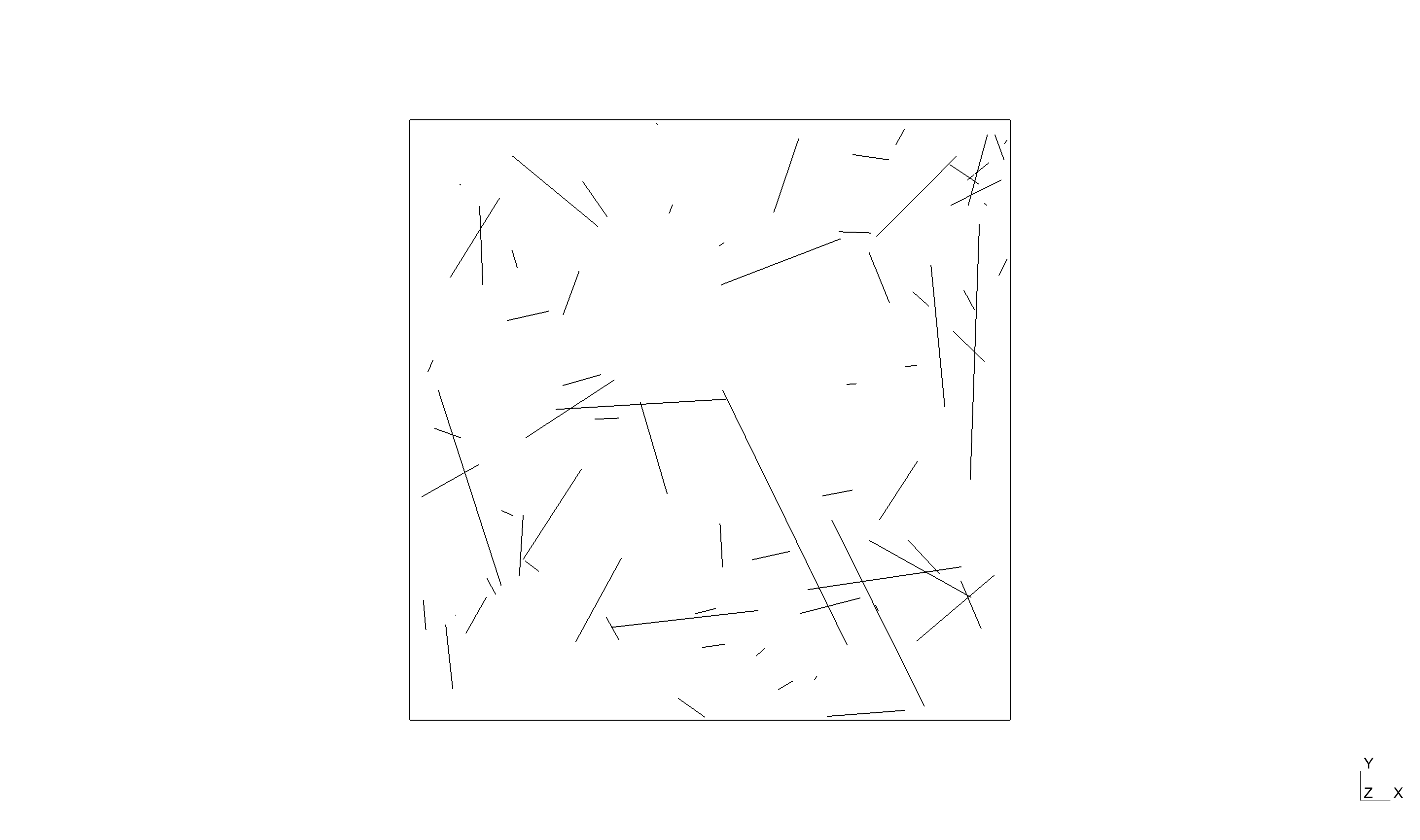}
	\caption{Idealized discrete fracture network(DFN)}
	\label{fig:fracture}
\end{figure}

\begin{figure}[h!]
	\centering
	\includegraphics[width = 0.6\textwidth]{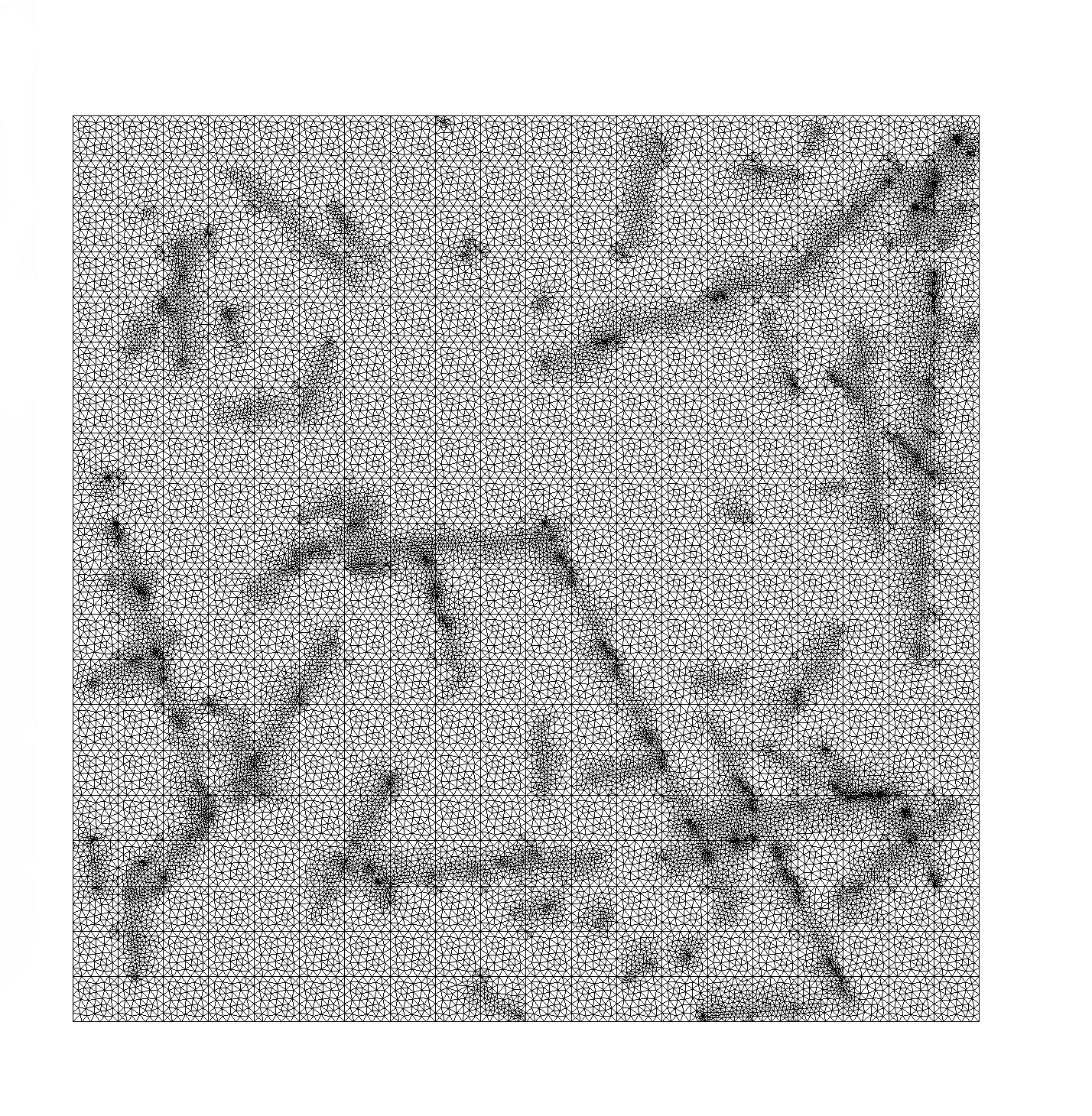}
	\caption{Unstructured fine mesh}
	\label{fig:fine_mesh}
\end{figure}

Numerical experiments are conducted from different aspects. Performance are compared between MsFEM and GMsFEM, nonzero source term and nonzero mixed boundary condition. We also discuss the impact of the number of basis function selected to the solution accuracy. We remark that all examples are conduced using direct linearization approch as the iterative approach do not significantly improve the results for our problem, which indicates that the nonlinearity in our problem is not very strong.
\subsection{Comparison of MsFEM and GMsFEM}
In this subsection, we discuss the necessity to apply GMsFEM. From Figure \ref{fig:Ms_GMs}, we can tell that, even with similar number of degrees of freedom, the MsFEM is not able to resolve the true solution, thus GMsFEM must be applied to generate meaningful results. This is especially true when there are multiple discrete fracture networks coexist in a single coarse neighborhood. Many numerical experiments have shown that MsFEM basis functions are not able to handle homogeneous background and multiple discrete fracture networks simultaneously.
Figure \ref{fig:Ms_GMs} shows the solution we obtained using MsFEM and GMsFEM respectively when a single source is placed at the bottom left corner. The error of MsFEM solution can be as large as $30\%$.
\begin{figure}[h!]
	\centering
	\includegraphics[width = 0.7\textwidth]{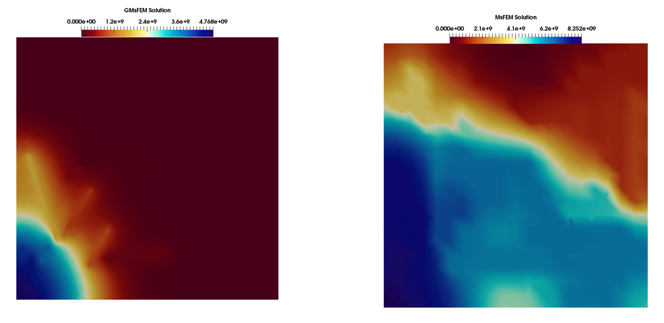}
	\caption{Comparison between GMsFEM and MsFEM solution with heterogeneous background and discrete fracture network. Left: GMsFEM solution with DOF=2646. Right MsFEM solution with DOF =2400.}
	\label{fig:Ms_GMs}
\end{figure}

\subsection{GMsFEM solution for different boundary condition and source}
In this subsection, we demonstrate the performance of our proposed triple continuum GMsFEM solution to problem \eqref{eq:multi-con} and \eqref{eq:vug_eq} ,where lagging coefficient scheme is used to linearize the problem.

Different boundary condition and source term settings are tested for coupled GMsFEM approach.

\begin{figure}[h!]
	\centering
	\includegraphics[width = \textwidth]{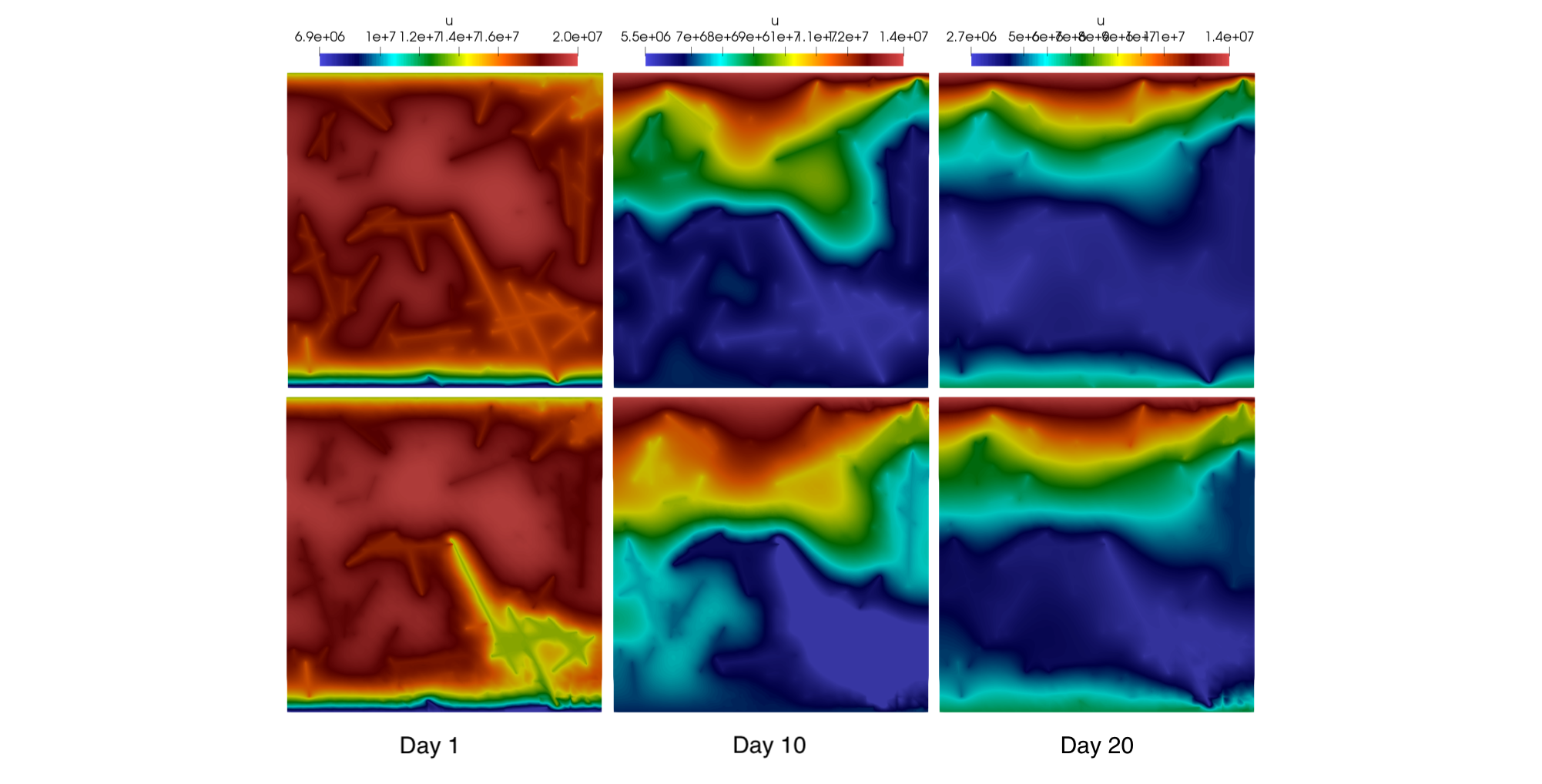}
	\caption{Triple-Continuum, heterogeneous background flow simulation matrix with top and bottom nonzero Dirichlet boundary condition. Zero Neumann boundary is applied to left and right boundary. First row: fine-scale reference solution, DOF = 80229. Second row: Coupled coarse-scale GMsFEM solution with 8 basis, DOF = 3528. }
	\label{fig:dirichlet}
\end{figure}

\begin{table}[h!]
  \centering
    \begin{tabular}{|c|r|r|r|}
    	\hline
     Number of Basis     & Day 1     & Day 10    & Day 20 \\
         	\hline
    2     & 17.21 & 27.22 & 66.44 \\
    4     & 14.88 & 17.27 & 43.65 \\
    8     & 4.72  & 11.86 & 13.31 \\
    16   & 4.24  & 12.05 & 12.58\\
        	\hline
    \end{tabular}%
    \caption{$L^2$ relative errors(\%) of numerical results for mixed boundary condition. Nonzero Dirichlet boundary condition is imposed on top and bottom boundary. Zero Neumann boundary is applied to left and right boundary.}
      \label{tab:dirichlet}
\end{table}%

\begin{figure}[h!]
	\centering
	\includegraphics[width = 0.8\textwidth]{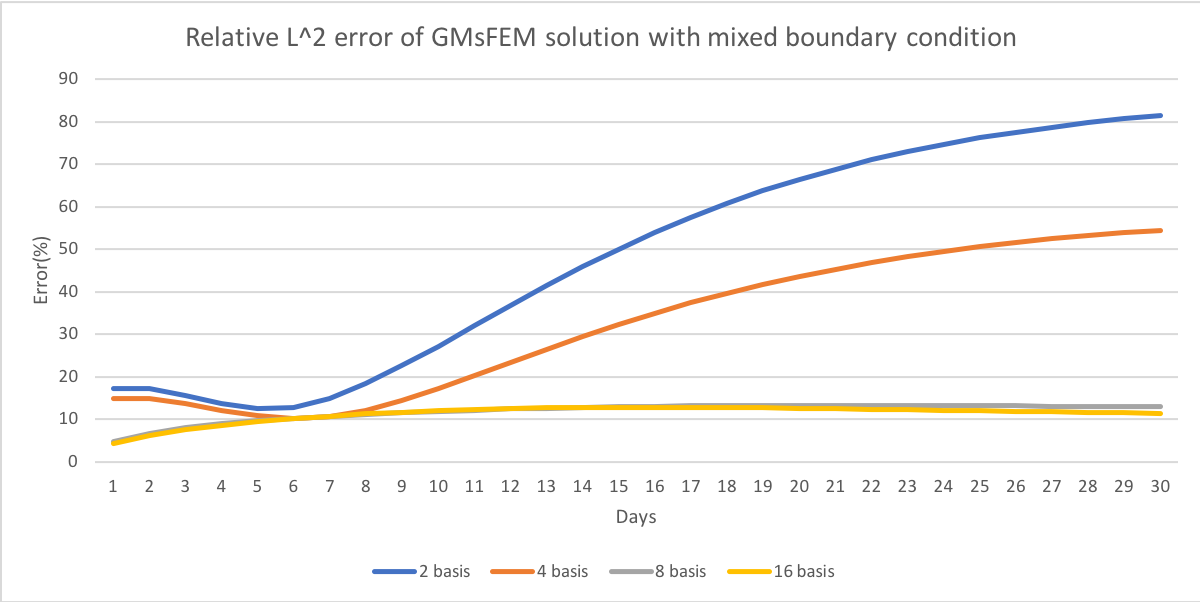}
	\caption{ Illustration of error trend with time for different number of basis for dirichlet boundary condition case}
	\label{fig:diff_eigen_trend_diri}
\end{figure}

%

\begin{figure}[h!]
	\centering
	\includegraphics[width = \textwidth]{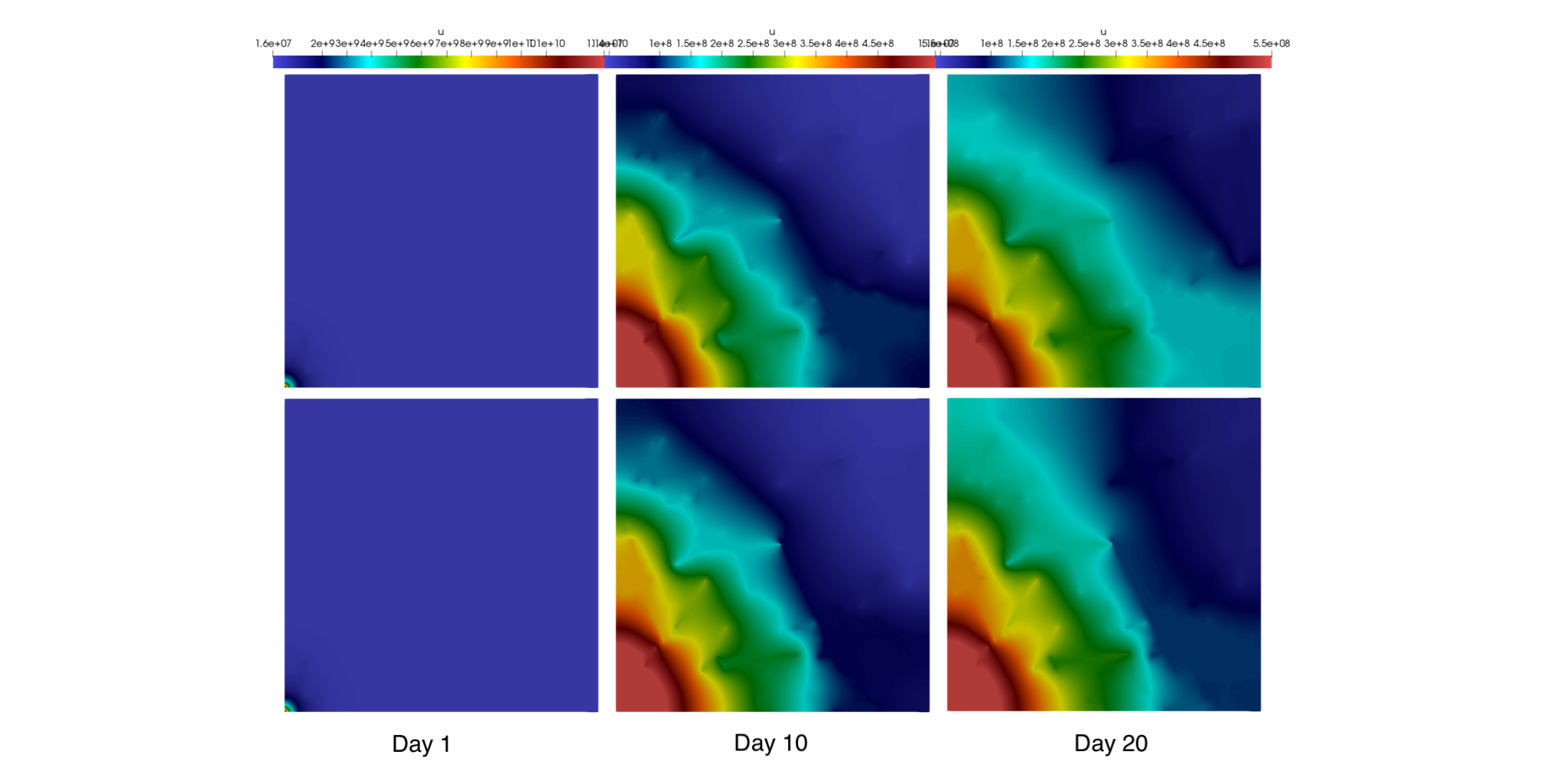}\label{fig:source}
	\caption{Flow simulation results for a triple continuum heterogeneous background matrix with no flow boundary condition. Injector located at bottom left corner. 
	First row : Fine-scale reference solution. Second row: Coupled coarse-scale GMsFEM solution. DOF is same as in Figure \ref{fig:dirichlet}.}
\end{figure}

\begin{table}[h!]
  \centering
    \begin{tabular}{|c|r|r|r|}
    	\hline
       Number of Basis      &Day 1     & Day 10    & Day 20 \\
       \hline
    2     & 15.79 & 10.05 & 11.42 \\
    4     & 5.48  & 5.89  & 8.53 \\
    8     & 2.84  & 6.20  & 8.51 \\
    16     & 1.12 & 6.30  & 8.49\\
    \hline
    \end{tabular}%
 \caption{$L^2$ relative errors(\%) of numerical results for zero Neumann boundary condition.}
  \label{tab:source}%
\end{table}%

\begin{figure}[h!]
	\centering
	\includegraphics[width = 0.8\textwidth]{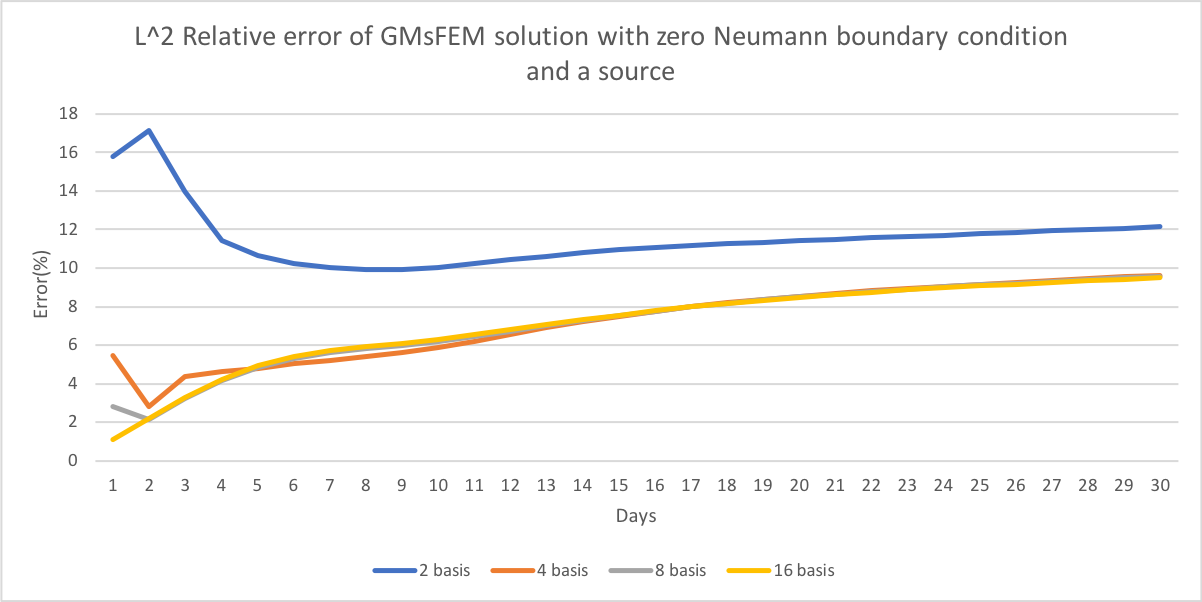}
	\caption{ Illustration of error trend with time for different number of basis for single source}
	\label{fig:diff_eigen_trend_source}
\end{figure}

%
%

From both error tables and solution figures , we come to 
the conclusion that: 1) For nonzero mixed boundary condition case, the GMsFEM solution can obtain a good result when using 8 basis or more.
 2) For zero Neumann boundary and single point source term case, the coupled approach can obtain good approximation of fine-scale solution with 4 basis or more. 3) For both cases, the coupled approach can give us an acceptable solution. 

%

From Figure \ref{fig:diff_eigen_trend_source} , Figure \ref{fig:diff_eigen_trend_diri}, Table \ref{tab:source} and Table \ref{tab:dirichlet}, we can tell that the error of solution 
decrease when we increase the number of eigen-functions used.

\section{Conclusion}\label{sec_conclusion}
In this paper, we proposed a triple continuum GMsFEM method as a fast solver of flow problems in heterogeneous domain. A fractured and vuggy reservoir is modeled as a coupled system of three continuum. Fractures are treated hierarchically, fractures with only global effects are considered as a continua, while the ones have local effects are represented as discrete fracture networks using DFM. The system coupling DFM and three continuum are discretized spatially following the Generalized Multiscale Finite Element Method(GMsFEM) for accurate and fast solution. Coupled assembling is provided to construct GMsFEM multiscale space.
The convergence of our proposed method is proved strictly following mild 
assumptions. Later, the performance is tested using multiple examples with 
different settings. We conclude that GMsFEM is necessary for complicated 
discrete fracture networks, the proposed approach can provide 
competitive approximation for both mixed boundary conditions and a single source case. The number of basis are also discussed and chosen. From the numerical exmaples, we can see that selecting enough number of basis is crutial to the accuracy of our proposed method.

 In short, we claim that our proposed method can accomplish the 
flow simulation task with both accuracy and efficiency. Nevertheless, we 
notice that our proposed method is only good for the case when a clear 
description of the discrete fracture networks is known. For reservoirs containing 
uncertainties, further exploration is desired. Besides, for vugs with turbulent 
flow inside, one will end up with a coupled PDE system containing Navier-Stokes 
system. Future investigations are required to expand our work to such cases.

\appendix\section{Proofs of error estimates}
\label{sec:proofs}

In this section, we present the proofs of the 
error estimates in Theorem~\ref{thm1} 
and Theorem~\ref{thm2}. 

\subsection{Proof of Theorem~\ref{thm1}}


\begin{proof}
Using \eqref{eq:weak} and \eqref{eq:ms}, we have 
$$b(\frac{\partial(u-u_{\text{ms}})}{\partial t}, v) + \sum_{1 \leq i < N} ({a}^i(u_i,v^i;u_i) - {a}^i(u_{\text{ms}}^i,v^i;u_\text{ms}^i)) + q(u-u_{\text{ms}}, v) = 0 \qquad\forall v\in V_{\text{ms}},\ t\in (0,T).$$
Let $w \in V_{\text{ms}}$ and take $v=w -u_{\text{ms}}$, we have 
\begin{equation*}
\begin{split}
&b(\frac{\partial (w-u_{\text{ms}})}{\partial t}, w -u_{\text{ms}})+ q(w-u_{\text{ms}}, w -u_{\text{ms}}) -  \sum_{1 \leq i < N} {a}^i(u^i_{\text{ms}} ,w^i-u_{\text{ms}}^i;u^i_{\text{ms}})\\
 =\ &b(\frac{\partial (w-u)}{\partial t}, w -u_{\text{ms}}) + q(w-u, w -u_{\text{ms}})-  \sum_{1 \leq i < N} {a}^i(u^i ,w^i-u_{\text{ms}}^i;u^i)
\end{split}
\end{equation*}
From this equation ,we can further get the following by the definition of ${a}^i$
and the bounded condition \eqref{eq:a_bound} of $a(u)$,
\begin{equation*}
\begin{split}
&b(\frac{\partial (w-u_{\text{ms}})}{\partial t}, w -u_{\text{ms}})+ 
\alpha^-q(w-u_{\text{ms}}, w -u_{\text{ms}}) +\alpha^- 
|w-u_{\text{ms}}^i|_a^2\\
 \leq\ & |b(\frac{\partial (w-u)}{\partial t}, w -u_{\text{ms}})| + 
\alpha^+|q(w-u, w -u_{\text{ms}})|+\alpha^+|w-u|_a |w-u_{\text{ms}}|_a\\
 &+ \sum_{1 \leq i < N} \int_{\Omega}|(\alpha(u_{\text{ms}}^i)-a(u_i))\kappa^i\nabla u_i\cdot \nabla(w^i-u_{\text{ms}}^i)|\ dx
\end{split}
\end{equation*}
By Cauchy-Schwarz Inequality, this implies
\begin{equation}\label{eq:mid2}
\begin{split}
&\frac{1}{2}\frac{d}{dt}\|w-u_{\text{ms}}\|_b^2 + \alpha^-\|w-u_{\text{ms}}\|^2_{a_Q}\\
 \leq & \|\frac{\partial (w-u)}{\partial t}\|_b\|w-u_{\text{ms}}\|_b +\alpha^+\|w-u\|_{a_Q}\|w-u_{\text{ms}}\|_{a_Q}\\
&+ \sum_{1 \leq i < N} \int_{\Omega}|(\alpha(u_{\text{ms}}^i)-\alpha(u^i))\kappa^i\nabla u^i\cdot \nabla(w^i-u_{\text{ms}}^i)|\ dx
\end{split}
\end{equation}
The last term on the right-hand side of \eqref{eq:mid2} can be written as 
\begin{equation}\label{eqqqq}
\begin{split}
& \int_{\Omega}|(\alpha(u_{\text{ms}}^i)-\alpha(u^i))\kappa^i\nabla u^i\cdot\nabla(w^i-u_{\text{ms}}^i)|\ dx \\
 =&\int_{\Omega_M}|(\alpha(u_{\text{ms}}^i)-\alpha(u^i))\kappa^i\nabla u^i\cdot\nabla(w^i-u_{\text{ms}}^i)|\ dx \\
&+\sum_s\int_{\Omega_{F,s}}|(\alpha(u_{\text{ms}}^i)-\alpha(u^i))\kappa_{F,s}\nabla_F u^i\cdot\nabla_F(w^i-u_{\text{ms}}^i)|\ dx
\end{split}
\end{equation}
Following \cite{farago1991finite}, we employ generalized Holder's Inequality and the definition of $\alpha(\cdot)$ to obtain 
\begin{equation}
\begin{split}
&\int_{\Omega_M}|(\alpha(u_{\text{ms}}^i)-\alpha(u^i))\kappa^i\nabla u^i\cdot\nabla(w^i-u_{\text{ms}}^i)|\ dx \\
\leq\ & \|\alpha(u_{\text{ms}}^i) -\alpha(u^i)  \|_{L^4(\Omega_{M})} \|(\kappa^i)^{1/2} \nabla u^i\|_{L^4(\Omega_M)} \|(\kappa^i)^{1/2} \nabla (w^i - u_{\text{ms}}^i)\|_{L^2(\Omega_M)} \\
=\ &\frac{c}{\mu} \|u_{\text{ms}}^i- u^i\|_{L^4(\Omega_{M})} \|(\kappa^i)^{1/2} \nabla u^i\|_{L^4(\Omega_M)} \|(\kappa^i)^{1/2} \nabla (w^i - u_{\text{ms}}^i)\|_{L^2(\Omega_M)} \\
\end{split}
\end{equation}
Further, with Ladyzhenskaya's Inequality, there exists some constant $C_1>0$ such that
\begin{equation}
 \|u_{\text{ms}}^i- u^i\|_{L^4(\Omega_M)} \leq C_1  \|u_{\text{ms}}^i- u^i\|_{L^2(\Omega_M)}^{1/2} \|\nabla(u_{\text{ms}}^i- u^i)\|_{L^2(\Omega_M)}^{1/2}
\end{equation}
There also exist some constant $K_1,K_2$ such that
\begin{equation*}
\begin{split}
\|\nabla(u^i_{\text{ms}}-u^i)\|_{L^2(\Omega_M)}^2 & = \int_{\Omega_M}(\nabla(u^i_{\text{ms}}-u^i))^2\ dx\leq K_1\int_{\Omega_M}\frac{\kappa^i}{\mu}(\nabla(u^i_{\text{ms}}-u^i))^2\ dx, \\
\|u^i_{\text{ms}}-u^i\|_{L^2(\Omega_M)}^2 & = \int_{\Omega_M}(u^i_{\text{ms}}-u^i)^2\ dx\leq K_2\int_{\Omega_M}b^i(u^i_{\text{ms}}-u^i)^2\ dx.
\end{split}
\end{equation*}

For the fracture part, we have 
\begin{equation}
\begin{split}
&\int_{\Omega_{F,s}}|(\alpha(u_{\text{ms}}^i)-\alpha(u^i))\kappa_{F,s}\nabla_F u^i\cdot\nabla_F(w^i-u_{\text{ms}}^i)|\ dx \\
\leq\ &C_2\|(\kappa_{F,s})^{1/2}\nabla u^i\| _{L^\infty}  \|u_{\text{ms}}^i- u^i\|_{L^2(\Omega_{F,s})} 
\|(\kappa_{F,s})^{1/2} \nabla(w^i-u_{\text{ms}})\|_{L^2(\Omega_{F,s})} \\
 \end{split}
\end{equation}

To sum up, we have for any $\zeta>0$,
\begin{equation}
\begin{split}
&\int_{\Omega}|(\alpha(u_{\text{ms}}^i)-\alpha(u^i))\kappa^i\nabla u^i\cdot\nabla(w^i-u_{\text{ms}}^i)|\ dx\\
\leq\ 
&C_3(\frac{1}{2\zeta}\|u^i_{\text{ms}}-u^i\|_{b}+\frac{\zeta}{2}|u^i_{\text{ms
}}-u^i|_{a})\cdot |w^i-u_{\text{ms}}^i|_{a}
\end{split}
\end{equation}
for some constant $C_3$.
Plug back to \eqref{eq:mid2}, and notice that $\vert \cdot \vert_a\leq \|\cdot\|_{a_Q}$ we can use Young's Inequality to derive
\begin{equation*}
\begin{split}
&\frac{1}{2}\frac{d}{dt}\|w-u_{\text{ms}}\|_b^2 + \alpha^-\|w-u_{\text{ms}}\|^2_{a_Q}\\
\leq\ &\frac{1}{2\eta}\|\frac{\partial (w-u)}{\partial t}\|_b^2 + \frac{\eta}{2}\|w-u_{\text{ms}}\|_b^2 +\frac{\alpha^+}{2\xi}\|w-u\|_{a_Q}^2 + \frac{\alpha^+\xi}{2}\|w-u_{\text{ms}}\|_{a_Q}^2\\
&+\frac{C_3}{4\epsilon\zeta} \ \sum_{1\leq 
i<N}b^{i}(w^i-u^i,w^i-u^i)+\frac{C_3}{4\epsilon\zeta} \| w-u_{\text{ms}} 
\|_{b}^2+\frac{C_3\zeta}{\epsilon} \|w-u\|_{a_Q}^2\\
&+\frac{C_3\zeta}{\epsilon} \| 
w-u_{\text{ms}} \|_{a_Q}^2+ \frac{C_3\epsilon}{2}\|w-u_{\text{ms}}\|_{a_{Q}}^2.
\end{split}
\end{equation*}
Rearrange the inequality and carefully choose $\epsilon$, $\zeta$, $\xi$, $\eta$ and let
$$K = {2\cdot (\frac{4(C_3)^4}{(\alpha^-)^3}+\frac{1}{2})}.$$
We obtain

\begin{equation}
\begin{split}
\frac{1}{2}\frac{d}{dt}\|w-u_{\text{ms}}\|_b^2 &-\frac{1}{2}K \|w-u_{\text{ms}} \|_{b}^2+ (\frac{\alpha^-}{4})\|w-u_{\text{ms}}\|^2_{a_Q}\\
&\leq \frac{1}{2}\|\frac{\partial (w-u)}{\partial t}\|_b^2 
+\frac{(\alpha^+)^2}{\alpha^-}\|w-u\|_{a_Q}^2+\frac{4C_3^2}{\alpha^-} \ 
\sum_{1\leq i<N}b^{i}(w^i-u^i,w^i-u^i).
\end{split}
\end{equation}
To get rid of term $\|u_{\text{ms}}-w\|^2_b$, we multiply a $e^{-Kt}\leq 1$ to the above inequality and integrate over $t$ from $0$ to $T$ for both sides, then we have
\begin{equation}\label{eq:mid}
	\begin{split}
	\frac{1}{2}\|w(T,\cdot)-u_{\text{ms}}(T,\cdot)\|_b^2  &+ \frac{\alpha^-\cdot e^{-KT}}{2}\int_0^T\|w-u_{\text{ms}}\|^2_{a_Q}\ dt\\
	&\leq \frac{1}{2}\int_0^T\|\frac{\partial (w-u)}{\partial t}\|_b^2\ dt +\frac{(\alpha^+)^2}{\alpha^-}\int_0^T\|w-u\|_{a_Q}^2\ dt\\
	&\quad+\frac{4(C_3)^2}{\alpha^-} \int_0^T\sum_{1\leq 
i<N}b^i(w^i-u^i,w^i-u^i)\ dt+
	\frac{1}{2}\|w(0,\cdot)-u_{\text{ms}}(0,\cdot)\|_b^2. 
	\end{split}
\end{equation} 
 
We further define initial value $u_{\text{ms}}(0, \cdot) \in V_{\text{ms}}$, s.t.
$$b(u_{\text{ms}}(0,\cdot), v) =b(u(0,\cdot), v)\quad \forall v\in V_{\text{ms}}. $$
Thus,
\begin{equation}\label{eq:tequal0}
\|w(0,\cdot)-u_{\text{ms}}(0,\cdot)\|_b\leq\|w(0,\cdot)-u(0,\cdot)\|_b.
\end{equation}

Making use of the Poincare Inequality, we also have for some constant $K_{3}>0$

%
\begin{equation}\label{eq:poincare}
\begin{split}
\ \sum_{1\leq i<N}b^{i}(w^i-u^i,w^i-u^i)
\leq K_3 \|w-u\|_{a_Q}^2.
\end{split}
\end{equation}

Combining \eqref{eq:mid}, \eqref{eq:tequal0} and \eqref{eq:poincare}, we conclude that 
there exist a constant $C_4>0$, such that 
\begin{equation}\label{eq:lemma1_mid_important}
\begin{split}
 \|w(T,\cdot)-u_{\text{ms}}(T,\cdot)\|_b^2  &+ \int_0^T\|w-u_{\text{ms}}\|^2_{a_Q}\ dt\\
&\leq \ C_4(\int_0^T\|\frac{\partial (w-u)}{\partial t}\|_b^2\ dt +\int_0^T\|w-u\|_{a_Q}^2\ dt+\|w(0,\cdot)-u(0,\cdot)\|_b^2 ).
\end{split}
\end{equation}

With \eqref{eq:lemma1_mid_important}, we can start derive the inequality for Theorem~\ref{thm1},
\begin{equation}\label{eq:lemma_mid}
\begin{split}
\|u(T,\cdot)-u_{\text{ms}}(T,\cdot)\|_b^2 & + \int_0^T \|u-u_{\text{ms}}\|_{a_Q}^2\ dt\\
 &\leq \|w(T,\cdot)-u(T,\cdot)\|_b^2 + \|w(T,\cdot)-u_{\text{ms}}(T,\cdot)\|_b^2\\
 &\quad+\int_0^T\|w-u\|_{a_Q}^2\ dt+\int_0^T\|w-u_{\text{ms}}\|_{a_Q}^2\ dt.
\end{split}	
\end{equation}
For the first term on the right hand side of Inequality \eqref{eq:lemma_mid}, we have
\begin{equation*}
\|w(T,\cdot)-u(T,\cdot)\|_b^2 
\leq\  2 \int_0^T\|\frac{\partial (w-u)}{\partial t}\|_b^2\ dt + 2\|w^i(0,\cdot)-u^i(0,\cdot)\|_b^2.
\end{equation*}
Combining the last estimate with \eqref{eq:lemma_mid} and \eqref{eq:lemma1_mid_important}, we conclude that 
for any $ w\in V_{\text{ms}}$, the inequality holds for a constant $C>0$, such that
\begin{equation}
\begin{split}
& \|u(T,\cdot)-u_{\text{ms}}(T,\cdot)\|_b^2 + \int_0^T \|u-u_{\text{ms}}\|_{a_Q}^2\ dt\\
\leq&\  C(\int_0^T\|\frac{\partial (w-u)}{\partial t}\|_b^2\ dt +\int_0^T\|w-u\|_{a_Q}^2\ dt+\|w(0,\cdot)-u(0,\cdot)\|_b^2 ).
\end{split}	
\end{equation}
This completes our proof.
 
\end{proof}

\subsection{Proof of Theorem~\ref{thm2}}
%

\begin{proof}
Since $u_{\text{snap}} \in V_{\text{snap}}$, we can write 
\begin{equation}\label{eq:u_snap_spectral}
u_{\text{snap}}(t, x) = \sum_j \sum_kc_k^{(j)}(t)\chi^{\omega_j}(x)\psi_{k}^{\omega_j}(x), 
\end{equation}

and we define the local component of $u_{\text{snap}}$ by 
\begin{equation}
\label{eq:u_snap_local}
u^{(j)}_{\text{snap}}(t, x) = \sum_k c_k^{(j)}(t)\psi_k^{\omega_j}(x).
\end{equation}
We define $w \in V_{\text{ms}}$ as the projection of $u_{\text{snap}}$ onto $V_{\text{ms}}$ by
\begin{equation}\label{eq:proj_w}
w=\sum_{j} \sum_{k =1}^{L_{\omega_j}}c_k^{(j)}(t)\psi_{k,ms}^{\omega_j}(x) = \sum_j \sum_{k=1}^{L_{\omega_j}}c_k^{(j)}(t)\chi^{\omega_j}(x)\psi_k^{\omega_j}(x).
\end{equation}
 From the definitions \eqref{eq:u_snap_spectral} and \eqref{eq:proj_w} ,we have 	
	\begin{equation}\label{eq:lemma_common}
		u_{\text{snap}} - w = \sum_j\sum_{k> L_{\omega_j}} c_k^{(j)}(t)\chi^{\omega_j}(x)\psi_k^{\omega_j}(x), 
	\end{equation}
The desired result follows from the estimates in 
Lemma~\ref{lemma2.1}, Lemma~\ref{lemma2.2} and Lemma~\ref{lemma2.3}.
\end{proof}

\begin{lemma}
\label{lemma2.1}
Let $u_{\text{snap}} \in V_{\text{snap}}$ be defined in \eqref{eq:u_snap} 
and $w \in V_{\text{ms}}$ be defined in \eqref{eq:proj_w}. 
Then there exists a constant $C>0$ such that 
\begin{equation}\label{lemma2:3}
\left\|\frac{\partial(u_{\text{snap}} - w )}{\partial t}\right\|_b^2 \leq 
\frac{C}{\Lambda}\left\|\frac{\partial u}{\partial t}\right\|_{a_Q}^2.
\end{equation}
\begin{proof}
	\begin{equation*}
		\frac{\partial(u_{\text{snap}} - w )}{\partial t}= \sum_j\sum_{k> L_{\omega_j}} (\frac{d}{dt}c_k^{(j)}(t))\chi^{\omega_j}(x)\psi_k^{\omega_j}(x) 
	\end{equation*}
Thus, for some constant $D_1>0$, we have 
	\begin{equation}\label{lemma2:1}
\|\frac{\partial(u_{\text{snap}} - w )}{\partial t}\|_b^2\\
\leq\   D_1 \sum_j\|\sum_{k> L_{\omega_j}} (\frac{d}{dt}c_k^{(j)}(t))\psi_k^{\omega_j}(x)\|_b^2, 
	\end{equation}
and the right-hand side can be estimated as 
\begin{equation*}
\begin{split}
&\|\sum_{k> L_{\omega_j}} (\frac{d}{dt}c_k^{(j)}(t))\psi_k^{\omega_j}(x)\|_b^2\\
 =\ & \sum_i\int_{\Omega_M} b^i (\sum_{k> L_{\omega_j}} (\frac{d}{dt}c_k^{(j)}(t))\psi_k^{i,\omega_j}(x))^2\ dx+
 \sum_i\sum_s\int_{\Omega_{F,s}} b_{F,s} (\sum_{k> L_{\omega_j}} (\frac{d}{dt}c_k^{(j)}(t))\psi_k^{i,\omega_j}(x))^2\ dx\\
\leq & D_2 [\sum_{1 \leq i < N}(\int_{\Omega_M}\frac{\kappa^i}{\mu} (\sum_{k> L_{\omega_j}} (\frac{d}{dt}c_k^{(j)}(t))\psi_k^{i,\omega_j}(x))^2\ dx+ \sum_s\int_{\Omega_{F,s}} \frac{\kappa_{F,s}}{\mu} (\sum_{k> L_{\omega_j}} (\frac{d}{dt}c_k^{(j)}(t))\psi_k^{i,\omega_j}(x))^2\ dx) \\
&+ (\int_{\Omega_M} (\sum_{k> L_{\omega_j}} (\frac{d}{dt}c_k^{(j)}(t))\psi_k^{N,\omega_j}(x))^2\ dx+ \sum_s\int_{\Omega_{F,s}}(\sum_{k> L_{\omega_j}} (\frac{d}{dt}c_k^{(j)}(t))\psi_k^{N,\omega_j}(x))^2\ dx)]\\
=\ & D_2 [\sum_{1\leq i<N}(\int_{\omega_{j, M}} \frac{\kappa^{i}}{\mu}  (\sum_{k> L_{\omega_j}} (\frac{d}{dt}c_k^{(j)}(t))\psi_k^{i,\omega_j}(x))^2\ dx+ \sum_s\int_{\omega_{j,F,s}} \frac{\kappa_{F,s}}{\mu}  (\sum_{k> L_{\omega_j}} (\frac{d}{dt}c_k^{(j)}(t))\psi_k^{i,\omega_j}(x))^2\ dx)\\
&+ (\int_{\omega_M} (\sum_{k> L_{\omega_j}} (\frac{d}{dt}c_k^{(j)}(t))\psi_k^{N,\omega_j}(x))^2\ dx+ \sum_s\int_{\omega_{F,s}}(\sum_{k> L_{\omega_j}} (\frac{d}{dt}c_k^{(j)}(t))\psi_k^{N,\omega_j}(x))^2\ dx)]\\
=\ &D_2 s^{(j)}(\sum_{k> L_{\omega_j}} (\frac{d}{dt}c_k^{(j)}(t))\psi_k^{\omega_j}(x), \sum_{k> L_{\omega_j}} (\frac{d}{dt}c_k^{(j)}(t))\psi_k^{\omega_j}(x))
\end{split}	
\end{equation*}
for some constant $D_2>0$.

By spectral problem \eqref{eq:spectral} and the orthogonality of eigenfunctions $\{\psi_k^{\omega_j}\}_k$, we have 
\begin{equation}
\begin{split}
& s^{(j)}(\sum_{k> L_{\omega_j}} (\frac{d}{dt}c_k^{(j)}(t))\psi_k^{\omega_j}(x), \sum_{k> L_{\omega_j}} (\frac{d}{dt}c_k^{(j)}(t))\psi_k^{\omega_j}(x))\\
\leq\ & \frac{1}{\lambda_{L_{\omega_j}+1}^{\omega_j}}a_Q ^{(j)}(\sum_{k> L_{\omega_j}} (\frac{d}{dt}c_k^{(j)}(t))\psi_k^{\omega_j}(x), \sum_{k> L_{\omega_j}} (\frac{d}{dt}c_k^{(j)}(t))\psi_k^{\omega_j}(x))\\
\leq\ & \frac{1}{\lambda_{L_{\omega_j}+1}^{\omega_j}}a_Q ^{(j)}(\sum_{k} (\frac{d}{dt}c_k^{(j)}(t))\psi_k^{\omega_j}(x), \sum_{k} (\frac{d}{dt}c_k^{(j)}(t))\psi_k^{\omega_j}(x))\\
 =\ & \frac{1}{\lambda_{L_{\omega_j}+1}^{\omega_j}}a_Q ^{(j)}(\frac{\partial u_{\text{snap}}^{(j)}}{\partial t}, \frac{\partial u_{\text{snap}}^{(j)}}{\partial t}).
\end{split}
\end{equation}
Substituting this equation back to \eqref{lemma2:1}, we obtain
\begin{equation}\label{lemma2:2}
	\begin{split}
	\|\frac{\partial(u_{\text{snap}} - w )}{\partial t}\|_b^2
	\leq D_1 D_2\sum_j \frac{1}{\lambda_{L_{\omega_j}+1}^{\omega_j}}a_Q ^{(j)}(\frac{\partial u_{\text{snap}}^{(j)}}{\partial t}, \frac{\partial u_{\text{snap}}^{(j)}}{\partial t}).\\
	\end{split}
\end{equation}
Since $u_{\text{snap}}$ is the projection of $u$ in each $\omega_j$ by definition \eqref{eq:u_snap_local}, so we have
$$a_Q^{j}(u^{(j)},v ) = a_Q^{j}(u_{\text{snap}}^{(j)},v )\qquad \forall v\in V_{\text{snap}}^{(j)} .$$
More specifically, let $v = u_{\text{snap}}^{(j)} $ we have 
\begin{equation*}
\begin{split}
a_Q^{j}(u_{\text{snap}}^{(j)},u_{\text{snap}}^{(j)} ) &=a_Q^{j}(u^{(j)},u_{\text{snap}}^{(j)} ),\\
\|u_{\text{snap}}^{(j)}\|_{a_Q}^2 &\leq \|u_{\text{snap}}^{(j)}\|_{a_Q} \|u^{(j)}\|_{a_Q}.
\end{split}
\end{equation*}
Therefore,
\begin{equation*}
a_Q^{j}(u_{\text{snap}}^{(j)},u_{\text{snap}}^{(j)} ) \leq a_Q^{j}(u^{(j)},u^{(j)} ).
\end{equation*}
Similarly,
\begin{equation}
a_Q^{j}(\frac{\partial u_{\text{snap}}^{(j)}}{\partial t},\frac{\partial u_{\text{snap}}^{(j)}}{\partial t}) \leq a_Q^{j}(\frac{\partial u^{(j)}}{\partial t},\frac{\partial u^{(j)}}{\partial t}).
\end{equation}
\begin{equation*}
\begin{split}
\end{split}
\end{equation*}
Thus, from \eqref{lemma2:2}, we have
\begin{equation}\label{lemma2:3}
\|\frac{\partial(u_{\text{snap}} - w )}{\partial t}\|_b^2 \leq D_1 D_2\sum_j \frac{1}{\lambda_{L_{\omega_j}+1}^{\omega_j}}a_Q^{j}(\frac{\partial u^{(j)}}{\partial t},\frac{\partial u^{(j)}}{\partial t})\\
 \leq \frac{D_1D_2}{\min_j\{\lambda_{L_{\omega_j}+1}^{\omega_j}\}}\|\frac{\partial u}{\partial t}\|_{a_Q}^2.
\end{equation}
This completes the proof.
\end{proof}
\end{lemma}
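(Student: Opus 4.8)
The plan is to expand $u_{\text{snap}} - w$ in the local spectral basis, localize the global $b$-norm via the partition of unity, and then use the spectral problem \eqref{eq:spectral} to convert the mass-type norm of the tail into an energy norm, gaining the factor $1/\Lambda$. First I would differentiate the identity \eqref{eq:lemma_common} in time. Since the multiscale functions $\chi^{\omega_j}\psi_k^{\omega_j}$ do not depend on $t$, only the coefficients carry the time dependence, giving $\partial_t(u_{\text{snap}}-w) = \sum_j\sum_{k>L_{\omega_j}} \dot{c}_k^{(j)}(t)\,\chi^{\omega_j}\psi_k^{\omega_j}$. Because the partition of unity $\{\chi^{\omega_j}\}$ has bounded overlap, the global $b$-norm of this sum is controlled by a finite sum of the local $b$-norms of $\sum_{k>L_{\omega_j}}\dot{c}_k^{(j)}\psi_k^{\omega_j}$, at the cost of an overlap constant $D_1$ that is independent of $H$ and $h$.

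Next I would compare the constant-coefficient mass form $b$ against the weighted mass form $s_\omega$ appearing in \eqref{eq:spectral}. Since each coefficient $b^i$ and $b_{F,s}$ is constant and $\kappa \geq \kappa^- > 0$, one has the pointwise domination $b^i \leq (b^i\mu/\kappa^-)\,(\kappa^i/\mu)$ on the matrix part, and the analogous bound on each fracture segment, so the local $b$-norm is bounded by $D_2\,s^{(j)}(v,v)$ with $v = \sum_{k>L_{\omega_j}}\dot{c}_k^{(j)}\psi_k^{\omega_j}$; the vug continuum, which carries no diffusion term in $a_\omega$, is absorbed through its own constant mass coefficient. This brings the spectral form into play, and I would then invoke the $s_\omega$-orthogonality of the eigenfunctions: for a combination supported on the tail $k>L_{\omega_j}$, the eigenrelation $s^{(j)}(\psi_k,\psi_k)=\lambda_k^{-1}a_Q^{(j)}(\psi_k,\psi_k)$ together with $\lambda_k \geq \lambda_{L_{\omega_j}+1}^{\omega_j}$ yields $s^{(j)}(v,v)\leq (\lambda_{L_{\omega_j}+1}^{\omega_j})^{-1}a_Q^{(j)}(v,v)$. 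Enlarging the tail sum to the full sum only increases the positive $a_Q^{(j)}$ form, so the right-hand side is identified with $(\lambda_{L_{\omega_j}+1}^{\omega_j})^{-1}a_Q^{(j)}(\partial_t u_{\text{snap}}^{(j)},\partial_t u_{\text{snap}}^{(j)})$.

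To close, I would use that $u_{\text{snap}}^{(j)}$ is the local $a_Q^{(j)}$-projection of $u^{(j)}$ onto the snapshot space, i.e. $a_Q^{(j)}(u^{(j)}-u_{\text{snap}}^{(j)},v)=0$ for all $v$ in the local snapshot space; a Cauchy--Schwarz argument on the Galerkin identity then gives the stability bound $a_Q^{(j)}(\partial_t u_{\text{snap}}^{(j)},\partial_t u_{\text{snap}}^{(j)}) \leq a_Q^{(j)}(\partial_t u^{(j)},\partial_t u^{(j)})$. Bounding each local eigenvalue below by $\Lambda = \min_j\{\lambda_{L_{\omega_j}+1}^{\omega_j}\}$ and summing over $j$, the local energies reassemble (again by bounded overlap) into the global $\|\partial_t u\|_{a_Q}^2$, producing the claim with $C=D_1 D_2$ times the overlap constant. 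I expect the main obstacle to be the careful justification of the localization: securing overlap-independent constants $D_1$ and $D_2$ simultaneously for the matrix integrals and the lower-dimensional DFM fracture integrals, and verifying that the diffusion-free vug continuum is still dominated through the mass comparison. The spectral-gap step itself is routine once the orthogonality of the eigenfunctions in both $s_\omega$ and $a_\omega$ is established.
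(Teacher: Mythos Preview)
Your proposal is correct and follows essentially the same route as the paper: differentiate \eqref{eq:lemma_common}, localize via bounded overlap of the partition of unity to obtain a constant $D_1$, dominate the $b$-norm by the local $s^{(j)}$-form through the lower bound on $\kappa$ to obtain $D_2$, apply the spectral inequality and eigenfunction orthogonality to gain $1/\lambda_{L_{\omega_j}+1}^{\omega_j}$ on the tail, enlarge to the full sum, and finish with the Galerkin stability $a_Q^{(j)}(\partial_t u_{\text{snap}}^{(j)},\partial_t u_{\text{snap}}^{(j)}) \leq a_Q^{(j)}(\partial_t u^{(j)},\partial_t u^{(j)})$. The paper's proof carries out exactly these steps in the same order, including the same treatment of the diffusion-free vug component through the mass comparison.
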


\begin{lemma}
\label{lemma2.20}
	For coupled multiscale basis function, if $u$ satisfies the following
	\begin{equation}
		\sum_{1\leq i<N}\int_{\omega_{j, M}} {\kappa^i}\nabla u^i\nabla v^i\ dx +	\sum_{1\leq i<N}\sum_s\frac{\kappa_{F,s}}{\mu}\int_{\omega_{j,F,s}}\kappa_{F,s} \nabla_F u^i\nabla_F v^i\ dx +\ q(u, v) = \int_{\omega} fv\ dx \qquad \forall v\in V_{\text{snap}}^{(j)},
	\end{equation}
there exists some constant $C$, such that	
	\begin{equation}\label{lemma_3_conclusion}
	\begin{split}
	&\sum_{{1\leq i<N}}\int_{\omega_{j, M}} {\kappa^i}(\chi^{\omega_j})^2(\nabla u^i)^2\ dx + 	\sum_{1\leq i<N}\sum_s\int_{\omega_{j,F,s}}{\kappa_{f,s}} (\chi^{\omega_j})^2(\nabla_F u^i)^2\ dx +q(\chi^{\omega_j}u, \chi^{\omega_j}u) \\
	\leq\ & C\{\sum_{1\leq i<N}[\int_{\omega_j} (f^i)^2\frac{(\chi^{\omega_j})^2}{|\nabla\chi^{\omega_j}|^2\kappa^i}\ dx+\int_{\omega_{j, M}}\kappa^{i} (u^i\nabla\chi^{\omega_j})^2\ dx +\sum_s\int_{\omega_{j,F,s}} {\kappa_{F,s}}(u^i\nabla_F\chi^{\omega_j})^2\ dx] \\
	&+ \int_{\omega_j} (f^N)^2\frac{(\chi^{\omega_j})^2}{|\nabla\chi^{\omega_j}|^2}\ dx \}.\\
	\end{split}
	\end{equation}
\end{lemma}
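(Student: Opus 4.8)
The plan is to run a Caccioppoli-type (reverse-Poincar\'e) energy argument: substitute $v=(\chi^{\omega_j})^2u$, i.e. $v^i=(\chi^{\omega_j})^2u^i$ on every continuum $i$, into the local variational identity of the hypothesis, and then isolate the cut-off weighted energy on the left by absorbing the remaining terms.

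First I would expand each term after the substitution. By the product rule, on the matrix region $\nabla u^i\cdot\nabla((\chi^{\omega_j})^2u^i)=(\chi^{\omega_j})^2|\nabla u^i|^2+2\chi^{\omega_j}u^i\,\nabla\chi^{\omega_j}\cdot\nabla u^i$, and similarly on each fracture segment $\omega_{j,F,s}$ with $\nabla_F$ in place of $\nabla$; for the coupling term one has the exact identity $q(u,(\chi^{\omega_j})^2u)=q(\chi^{\omega_j}u,\chi^{\omega_j}u)$, which is exactly the (non-negative) coupling term on the left of \eqref{lemma_3_conclusion}. Moving the three ``good'' terms --- the matrix and fracture energies $\kappa^i(\chi^{\omega_j})^2|\nabla u^i|^2$ (summed over $i<N$ and $s$) and $q(\chi^{\omega_j}u,\chi^{\omega_j}u)$ --- to the left, it remains to bound the gradient cross terms $2\int_{\omega_{j,M}}\kappa^i\chi^{\omega_j}u^i\nabla\chi^{\omega_j}\cdot\nabla u^i$ (and their fracture analogues) and the source term $\int_{\omega_j}f\cdot(\chi^{\omega_j})^2u$. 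For the cross terms I would use Cauchy--Schwarz and Young's inequality with a small parameter $\epsilon$, namely $|2\kappa^i\chi^{\omega_j}u^i\,\nabla\chi^{\omega_j}\cdot\nabla u^i|\le\epsilon\,\kappa^i(\chi^{\omega_j})^2|\nabla u^i|^2+\epsilon^{-1}\kappa^i(u^i\nabla\chi^{\omega_j})^2$, absorbing the first summand into the left-hand side for $\epsilon<1$ and keeping the second, which is already one of the terms on the right of \eqref{lemma_3_conclusion}.

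For the source term and $1\le i<N$, I would split $\int_{\omega_j}f^i(\chi^{\omega_j})^2u^i\le\big(\int_{\omega_j}(f^i)^2(\chi^{\omega_j})^2/(\kappa^i|\nabla\chi^{\omega_j}|^2)\big)^{1/2}\big(\int_{\omega_j}\kappa^i|\nabla\chi^{\omega_j}|^2(\chi^{\omega_j})^2(u^i)^2\big)^{1/2}$ and apply Young's inequality together with $0\le\chi^{\omega_j}\le1$ to land exactly on the admissible right-hand side terms $\int_{\omega_j}(f^i)^2(\chi^{\omega_j})^2/(\kappa^i|\nabla\chi^{\omega_j}|^2)$ and $\int_{\omega_j}\kappa^i(u^i\nabla\chi^{\omega_j})^2$; the matrix and fracture pieces are treated the same way with the respective permeabilities $\kappa^i$ and $\kappa_{F,s}$.

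The step needing the most care is the last continuum $i=N$ (the vug), which carries no diffusion term, so $\int_{\omega_j}f^N(\chi^{\omega_j})^2u^N$ cannot be bounded by a $\kappa^N$-weighted energy and the right-hand side of \eqref{lemma_3_conclusion} contains no $u^N$-term at all. The key observation is that the $N$-th equation of the system is purely algebraic, giving the pointwise relation $\sum_{j\neq N}q^{N,j}(u^N-u^j)=f^N$; hence $u^N$ equals a convex combination of $\{u^j\}_{j<N}$ plus a fixed multiple of $f^N$. Substituting this expression turns $\int_{\omega_j}f^N(\chi^{\omega_j})^2u^N$ into a combination of terms $\int_{\omega_j}f^N(\chi^{\omega_j})^2u^i$ with $i<N$ --- handled as in the previous paragraph, using $\kappa^i\ge\kappa^-$ --- and the term $\int_{\omega_j}(f^N)^2(\chi^{\omega_j})^2$, which is dominated by $\|\nabla\chi^{\omega_j}\|_{L^\infty}^2\int_{\omega_j}(f^N)^2(\chi^{\omega_j})^2/|\nabla\chi^{\omega_j}|^2$, i.e. the remaining right-hand side term of \eqref{lemma_3_conclusion}. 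Collecting the estimates, choosing $\epsilon$ and the other Young parameters appropriately, and absorbing all remaining constants --- which depend only on $\mu$, $\kappa^{\pm}$, the transfer coefficients $q^{i,j}$, $N$ and the number of fractures --- yields \eqref{lemma_3_conclusion}.
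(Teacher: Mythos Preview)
Your approach is essentially the same Caccioppoli-type argument as the paper's: both test with $v=(\chi^{\omega_j})^2 u$, expand via the product rule, use the identity $q(u,\chi^2 u)=q(\chi u,\chi u)$, and control the cross terms and source terms by Young's inequality with a parameter (the paper simply fixes $\epsilon=\tfrac12$).

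The one substantive difference is your handling of the vug component $i=N$. The paper applies Young's inequality directly to $\int_{\omega_j} f^N(\chi^{\omega_j})^2 u^N$, which produces a residual term of the form $\int_{\omega_{j,M}}(u^N\nabla\chi^{\omega_j})^2$ (and its fracture analogue) on the right-hand side; it then writes ``rearrange the inequality'' without indicating how this $u^N$ term---which is absent from the stated bound \eqref{lemma_3_conclusion}---is disposed of. Your use of the pointwise algebraic relation $u^N=\bigl(\sum_{j<N}q^{N,j}u^j+f^N\bigr)\big/\sum_{j<N}q^{N,j}$, coming from the diffusion-free $N$-th equation, to convert the $u^N$ contribution into $u^i$-terms for $i<N$ and an $(f^N)^2$ term is a clean way to close this step and actually delivers the conclusion as stated. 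So your argument is not only correct but slightly more complete on this point than the paper's own write-up.
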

\begin{proof}
Let $v = (\chi^{\omega_j})^2u$ and obtain 
\begin{equation*}
\sum_{1\leq i<N}\int_{\omega_{j, M}}{\kappa^i} \nabla u^i\nabla ((\chi^{\omega_j})^2u^i)\ dx +	\sum_{1\leq i<N}\sum_s\int_{\omega_{j,F,s}}\frac{\kappa_{F,s}}{\mu} \nabla_F u^i\nabla_F((\chi^{\omega_j})^2u^i) \ dx +q(u, (\chi^{\omega_j})^2u) = \int_{\omega} f(\chi^{\omega_j})^2u\ dx.
\end{equation*}
This can be further rewrite as 
\begin{equation*}
	\begin{split}
&\sum_{1\leq i<N}\int_{\omega_{j, M}} \kappa^i(\chi^{\omega_j})^2(\nabla u^i)^2\ dx + 	\sum_{1\leq i<N}\sum_s\int_{\omega_{j,F,s}}\kappa_{F,s}  (\chi^{\omega_j})^2(\nabla_F u^i)^2\ dx +q(\chi^{\omega_j}u, \chi^{\omega_j}u) \\
=\ & \sum_{1\leq i<N}\int_{\omega} f^i\frac{(\chi^{\omega_j})^2}{\nabla\chi^{\omega_j}\sqrt{\kappa^i}}\sqrt{\kappa^i}u^i\nabla\chi^{\omega_j}\ dx+\int_{\omega} f^N\frac{(\chi^{\omega_j})^2}{\nabla\chi^{\omega_j}}u^N\nabla\chi^{\omega_j}\ dx\\
&\ -2\sum_{1\leq i<N}\int_{\omega_{j, M}}\kappa^i \nabla u^i\nabla \chi^{\omega_j} u^i \chi^{\omega_j}\ dx -2\sum_{1\leq i<N}\sum_s\int_{\omega_{j,F,s}}\kappa_{F,s} \nabla_F u^i\nabla_F \chi^{\omega_j}u^i\chi^{\omega_j}\ dx\\
\leq\ & \frac{\epsilon}{2}\sum_{1\leq i<N}\int_{\omega_j} (f^i)^2\frac{(\chi^{\omega_j})^4}{|\nabla\chi^{\omega_j}|^2\kappa^i}\ dx+\frac{1}{2\epsilon}\sum_{1\leq i<N}\int_{\omega_{j, M}}\kappa^i(u^i\nabla\chi^{\omega_j})^2\ dx\\
&+ \frac{\epsilon}{2}\int_{\omega_j} (f^N)^2\frac{(\chi^{\omega_j})^4}{|\nabla\chi^{\omega_j}|^2}\ dx+\frac{1}{2\epsilon}\int_{\omega_{j, M}}(u^N\nabla)^2\ dx\\
&+\sum_{1\leq i<N}\sum_s\frac{1}{2\epsilon}\int_{\omega_{j,F,s}}\kappa_{F,s}(u^i\nabla_F)^2\ dx + \sum_s\frac{1}{2\epsilon}\int_{\omega_{j,F,s}}(u^N\nabla_F)^2\ dx\\
&+\epsilon\sum_{1\leq i<N}\int_{\omega_{j, M}}\kappa^i (\chi^{\omega_j}\nabla u^i )^2\ dx +  \frac{1}{ \epsilon}\sum_{1\leq i<N}\int_{\omega_{j, M}}\kappa^i (u^i\nabla\chi^{\omega_j})^2\ dx\\
& +\epsilon\sum_{1\leq i<N}\sum_s\int_{\omega_{j,F,s}}\kappa_{F,s} (\chi^{\omega_j}\nabla_F u^i)^2\ dx +  \frac{1}{ \epsilon}\sum_{1\leq i<N}\sum_s\int_{\omega_{j,F,s}} \kappa_{F,s}(u^i\nabla_F\chi^{\omega_j})^2\ dx.
\end{split}
\end{equation*}
Let $\epsilon =1/2$ and rearrange the inequality. Then, for some constant $C>0$, we obtain the conclusion of \eqref{lemma_3_conclusion}.

\end{proof} 

\begin{lemma}
\label{lemma2.2}
Let $u_{\text{snap}} \in V_{\text{snap}}$ be defined in \eqref{eq:u_snap} 
and $w \in V_{\text{ms}}$ be defined in \eqref{eq:proj_w}. 
Then there exists a constant $C>0$ such that 
	\begin{equation}
		\int_0^T \|w-u_{\text{snap}}\|_{a_Q}^2\ dt\leq \frac{C}{\Lambda}\int_0^T \|u\|_{a_Q}^2.
	\end{equation}
\end{lemma}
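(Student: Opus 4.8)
The plan is to follow the template of the proof of Lemma~\ref{lemma2.1}, but since $\|\cdot\|_{a_Q}$ controls gradients, the elementary norm‑equivalence step used there must be upgraded to an application of Lemma~\ref{lemma2.20}, whose role is precisely to absorb the coarse‑neighborhood cutoff $\chi^{\omega_j}$ into lower‑order terms. First I would localize: from \eqref{eq:lemma_common}, pointwise in $t$, $u_{\text{snap}} - w = \sum_j \chi^{\omega_j}\eta^{(j)}$ with $\eta^{(j)} = \sum_{k > L_{\omega_j}} c_k^{(j)}(t)\psi_k^{\omega_j}$, each summand supported in $\overline{\omega_j}$; since every coarse block meets a uniformly bounded number of the $\omega_j$, a Cauchy--Schwarz argument gives
$$\|w - u_{\text{snap}}\|_{a_Q}^2 \le C\sum_j \bigl|\chi^{\omega_j}\eta^{(j)}\bigr|_{a_Q,\omega_j}^2,$$
where $|\cdot|_{a_Q,\omega_j}$ is the restriction of $\|\cdot\|_{a_Q}$ to $\omega_j$, and the integration in $t$ is postponed to the end.

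Next I would identify the local problem solved by $\eta^{(j)}$. By bilinearity and the spectral identity \eqref{eq:spectral}, for every $v \in V_{\text{snap}}^{(j)}$ one has $a_Q^{(j)}(\eta^{(j)},v) = s^{(j)}(g^{(j)},v)$ with $g^{(j)} = \sum_{k>L_{\omega_j}} \lambda_k^{\omega_j}c_k^{(j)}\psi_k^{\omega_j}$, i.e. $\eta^{(j)}$ solves a problem of exactly the type treated in Lemma~\ref{lemma2.20}, with right‑hand side $f^{(j),i}$ proportional to $\kappa^i g^{(j),i}$ (in the purely diffusive directions this reduces to $f^{(j)} = 0$, since the snapshot basis is discretely harmonic). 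Splitting $|\nabla(\chi^{\omega_j}\eta^{(j),i})|^2 \le 2(\chi^{\omega_j})^2|\nabla\eta^{(j),i}|^2 + 2|\eta^{(j),i}|^2|\nabla\chi^{\omega_j}|^2$, likewise along fractures, I bound $|\chi^{\omega_j}\eta^{(j)}|_{a_Q,\omega_j}^2$ by twice the left‑hand side of \eqref{lemma_3_conclusion} (which already contains the $\chi^2|\nabla\eta|^2$ pieces and $q(\chi^{\omega_j}\eta^{(j)},\chi^{\omega_j}\eta^{(j)})$) plus the remainders $\sum_i\int_{\omega_{j,M}}\kappa^i|\eta^{(j),i}|^2|\nabla\chi^{\omega_j}|^2 + \sum_i\sum_s\int_{\omega_{j,F,s}}\kappa_{F,s}|\eta^{(j),i}|^2|\nabla_F\chi^{\omega_j}|^2$, and then invoke the bound \eqref{lemma_3_conclusion} itself.

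It then remains to collapse the resulting right‑hand side onto $\Lambda^{-1}\|u\|_{a_Q}^2$. Using $|\nabla\chi^{\omega_j}| \asymp H^{-1}$ and $\kappa^- \le \kappa \le \kappa^+$, the $\kappa^i|\eta^{(j),i}\nabla\chi^{\omega_j}|^2$‑type terms and the $(f^{(j)})^2(\chi^{\omega_j})^2/(|\nabla\chi^{\omega_j}|^2\kappa)$‑type terms reduce, up to constants depending only on $\kappa^\pm,\mu$, to $s^{(j)}$‑quantities of $\eta^{(j)}$ (and of $g^{(j)}$), the powers of $H$ from $|\nabla\chi^{\omega_j}|$ matching the shape of $s_\omega$. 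Then, exactly as in the proof of Lemma~\ref{lemma2.1}, $s^{(j)}$‑ and $a_Q^{(j)}$‑orthogonality of the eigenfunctions give
$$s^{(j)}(\eta^{(j)},\eta^{(j)}) = \sum_{k>L_{\omega_j}}(c_k^{(j)})^2 s^{(j)}(\psi_k^{\omega_j},\psi_k^{\omega_j}) \le \frac{1}{\lambda_{L_{\omega_j}+1}^{\omega_j}}a_Q^{(j)}(\eta^{(j)},\eta^{(j)}) \le \frac{1}{\lambda_{L_{\omega_j}+1}^{\omega_j}}a_Q^{(j)}(u_{\text{snap}}^{(j)},u_{\text{snap}}^{(j)}),$$
and the $g^{(j)}$‑contribution carries one extra factor $\lambda_k^{\omega_j}$, which is cancelled by the extra power of $H$ noted above. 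Finally, testing the projection identity $a_Q^{(j)}(u^{(j)},v) = a_Q^{(j)}(u_{\text{snap}}^{(j)},v)$, $v\in V_{\text{snap}}^{(j)}$, at $v = u_{\text{snap}}^{(j)}$ gives $a_Q^{(j)}(u_{\text{snap}}^{(j)},u_{\text{snap}}^{(j)}) \le a_Q^{(j)}(u^{(j)},u^{(j)})$ up to \eqref{eq:a_bound}; summing over $j$ with $\lambda_{L_{\omega_j}+1}^{\omega_j} \ge \Lambda$ and bounded overlap, then integrating over $(0,T)$, yields the claim.

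I expect the main obstacle to be the bookkeeping in the last paragraph: reading off the correct right‑hand side $f^{(j)}$ for Lemma~\ref{lemma2.20} and tracking the powers of $H$ from $|\nabla\chi^{\omega_j}|$ against the eigenvalue weights carried through $g^{(j)}$, so that the net dependence is a single $\Lambda^{-1}$ and every hidden constant depends only on $\kappa^\pm,\alpha^\pm,\mu$ and the mesh regularity, not on the permeability contrast or on $H$.
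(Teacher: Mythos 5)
Your overall skeleton is the same as the paper's: localize via the partition of unity, control the cutoff $\chi^{\omega_j}$ through Lemma~\ref{lemma2.20}, use $s^{(j)}$-/$a_Q^{(j)}$-orthogonality of the eigenfunctions to extract $1/\lambda^{\omega_j}_{L_{\omega_j}+1}$, and pass from $u_{\text{snap}}$ to $u$ by the projection identity. The bounded-overlap Cauchy--Schwarz step and the final spectral and projection steps are fine. The genuine gap is in your identification of the local source term when you invoke Lemma~\ref{lemma2.20}. The paper applies that lemma to $e^{(j)}=\sum_{k>L_{\omega_j}}c^{(j)}_k\psi^{\omega_j}_k$ with \emph{zero} right-hand side: $e^{(j)}$ lies in the snapshot space, and every snapshot function satisfies the homogeneous coupled system \eqref{eq:snap_coupled} (diffusive, exchange, and vug equations alike) against test functions vanishing on $\partial\omega_j$ --- and the test function actually used in the proof of Lemma~\ref{lemma2.20}, namely $(\chi^{\omega_j})^2 u$, is exactly of that type. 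Hence the only surviving right-hand terms are the $\kappa\,|u\,\nabla\chi^{\omega_j}|^2$ terms, which are bounded by $s^{(j)}(e^{(j)},e^{(j)})$ and then by $\frac{1}{\lambda^{\omega_j}_{L_{\omega_j}+1}}a_Q^{(j)}(u^{(j)}_{\text{snap}},u^{(j)}_{\text{snap}})$. Your parenthetical concedes this only "in the purely diffusive directions''; in fact it holds for the full coupled system, so no source term should appear at all.

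Instead you carry a nonzero source $f^{(j),i}\propto\kappa^i g^{(j),i}$ with $g^{(j)}=\sum_{k>L_{\omega_j}}\lambda^{\omega_j}_k c^{(j)}_k\psi^{\omega_j}_k$, taken from the spectral identity on the snapshot test space, and then claim the extra factor $\lambda^{\omega_j}_k$ is "cancelled by the extra power of $H$'' coming from $|\nabla\chi^{\omega_j}|^{-2}$. That cancellation does not hold. With $s^{(j)}$-normalized eigenfunctions, your $f$-term contributes on the order of
\begin{equation*}
H^2\sum_{k>L_{\omega_j}}\bigl(\lambda^{\omega_j}_k\bigr)^2\bigl(c^{(j)}_k\bigr)^2,
\end{equation*}
while the target bound is $\Lambda^{-1}\sum_k \lambda^{\omega_j}_k (c^{(j)}_k)^2$; term by term you would need $H^2\,\lambda^{\omega_j}_k\,\Lambda\lesssim 1$ for every tail index $k$, and the discarded eigenvalues can be as large as $O(h^{-2})$ (the snapshot space resolves the fine grid), so $H^2\lambda^{\omega_j}_k$ blows up as $h\to 0$. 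The step therefore fails as written and cannot be repaired by bookkeeping alone; it must be replaced by the observation that $f^{(j)}=0$ by discrete harmonicity of the coupled snapshot functions, which is precisely how the paper's proof closes the estimate.
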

\begin{proof}
By \eqref{eq:lemma_common}, we have 
\begin{equation}\label{lemma3:1}
	\|w-u_{\text{snap}}\|_{a_Q}^2 = \|\sum_j\sum_{k> L_{\omega_j}} c_k^{(j)}(t)\chi^{\omega_j}(x)\psi_k^{\omega_j}(x) \|_{a_Q}^2
	\leq N_v\sum_j\|\chi^{\omega_j}(x)\sum_{k>L_{\omega_j}}c_k^{(j)}(t)\psi_k^{\omega_j}(x)\|_{a_Q}^2.
\end{equation}
Let 
$$e^{(j)} = \sum_{k>L_{\omega_j}}c_k^{(j)}(t)\psi_k^{\omega_j}(x),$$
then
\begin{equation*}
\begin{split}
&\|\chi^{\omega_j}(x)e^{(j)}\|_{a_Q}^2\\
=\ & \sum_{1\leq i<N}\int_{\omega_{j, M}}\frac{\kappa^i}{\mu} (\chi^{\omega_j})^2[\nabla e^{(j),i}]^2\ dx+ \sum_{1\leq i<N}\int_{\omega_{j, M}}\frac{\kappa^i}{\mu} (\nabla\chi^{\omega_j})^2[e^{(j),i}]^2\ dx\\
 &+ \sum_{1\leq i<N}\sum_s\int_{\omega_{j,F,s}}\frac{\kappa_{F,s}}{\mu} [\nabla_F(\chi^{\omega_j})]^2[e^{(j),i}]^2\ dx+ \sum_{1\leq i<N}\sum_s\int_{\omega_{j,F,s}}\frac{\kappa_{F,s}}{\mu} (\chi^{\omega_j})^2[e^{(j),i}]^2\ dx\\
 &+q(\chi^{\omega_j}(x)e^{(j)}, \chi^{\omega_j}(x)e^{(j)}),
\end{split}
\end{equation*}
where 
\begin{equation*}
\begin{split}
&\sum_{1\leq i<N}\int_{\omega_{j, M}}\frac{\kappa^i}{\mu} (\nabla\chi^{\omega_j})^2[e^{(j),i}]^2\ dx + \sum_{1\leq i<N}\sum_s\int_{\omega_{j,F,s}}\frac{\kappa_{F,s}}{\mu} [\nabla_F(\chi^{\omega_j})]^2[e^{(j),i}]^2\ dx \\
\leq\ &D_3\sum_{1\leq i<N}\int_{\omega_{j, M}}\frac{\kappa^i}{\mu} [e^{(j),i}]^2\ dx + D_3 \sum_{1\leq i<N}\sum_s\int_{\omega_{j,F,s}}\frac{\kappa_{F,s}}{\mu} [e^{(j),i}]^2\ dx\\
\leq\ &D_3 s^{(j)} (e^{(j)}, e^{(j)})\\
\end{split}
\end{equation*}
for some constant $D_3$.
From Lemma~\ref{lemma2.20}, there exists some constant $D_4$ such that 
\begin{equation*}
\begin{split}
&\sum_{1\leq i<N}\int_{\omega_{j, M}}\frac{\kappa^i}{\mu} (\chi^{\omega_j})^2[\nabla e^{(j),i}]^2\ dx+\sum_{1\leq i<N}\sum_s\int_{\omega_{j,F,s}}\frac{\kappa_{F,s}}{\mu} (\chi^{\omega_j})^2[\nabla_F e^{(j),i}]^2\ dx + q(\chi^{\omega_j}(x) e^{(j)}, \chi^{\omega_j}(x)e^{(j)})\\ 
\leq\ &D_4 [\sum_{1\leq i<N}\int_{\omega_{j,M}}\frac{\kappa^i}{\mu}|\nabla\chi^{\omega_j}|^2(e^{(j)})^2\ dx + \sum_{1\leq i<N}\sum_s\int_{\omega_{j,F,s}}\frac{\kappa_{F,s}}{\mu}|\nabla_F\chi^{\omega_j}|^2(e^{(j)})^2\ dx]\\
\leq\ &D_3D_4\ s^{(j)}(e^{(j)}, e^{(j)}).
\end{split}
\end{equation*}
By bilinearity of $ a^{(j)}$ and $s^{(j)}$ as well as the orthogonality of $\{\psi_k^{\omega_j}\}_k$ ,we finally have
\begin{equation}
\begin{split}
	&\|w-u_{\text{snap}}\|_{a_Q}^2  \leq N_v\sum_j\|\chi^{\omega_j}(x)e^{(j)}\|_{a_Q}^2\leq D_5\sum_j s^{(j)}(e^{(j)}, e^{(j)})\\
	\leq\ & D_5\sum_j \frac{1}{\lambda_{L_{\omega_j}+1}^{\omega_j}}a_Q^{(j)}(e^{(j)}, e^{(j)})\leq \frac{D_5}{\Lambda} a_Q(u_{\text{snap}}, u_{\text{snap}}) = \frac{D_5}{\Lambda} \|u_{\text{snap}}\|_{a_Q}^2,
\end{split}
\end{equation}
 for a properly selected constant $D_5$.
	
\end{proof}
\begin{lemma}
\label{lemma2.3}
Let $u_{\text{snap}} \in V_{\text{snap}}$ be defined in \eqref{eq:u_snap} 
and $w \in V_{\text{ms}}$ be defined in \eqref{eq:proj_w}. 
Then there exists a constant $C>0$ such that 
	\begin{equation}
	\|w(0,\cdot)-u_{\text{snap}}(0,\cdot)\|_b^2\leq\frac{C}{\Lambda}\|u(0,\cdot)\|_{a_Q}^2.
	\end{equation}
\end{lemma}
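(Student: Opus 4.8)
The plan is to transcribe the argument of Lemma~\ref{lemma2.1} to the initial time $t=0$, removing the time derivative and the time integral. Evaluating the identity \eqref{eq:lemma_common} at $t=0$ gives
\begin{equation*}
w(0,\cdot) - u_{\text{snap}}(0,\cdot) = \sum_j \sum_{k > L_{\omega_j}} c_k^{(j)}(0)\, \chi^{\omega_j}(x)\, \psi_k^{\omega_j}(x),
\end{equation*}
so the quantity to control is the $b$-norm of a function that, restricted to each coarse neighborhood $\omega_j$, lies in the span of the discarded spectral modes $\{\psi_k^{\omega_j} : k > L_{\omega_j}\}$. This is precisely the configuration treated in Lemma~\ref{lemma2.1}, with the coefficients $\tfrac{d}{dt}c_k^{(j)}(t)$ there replaced by $c_k^{(j)}(0)$ here.

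First I would use the bounded overlap of the coarse neighborhoods together with $0 \le \chi^{\omega_j} \le 1$ to split the global $b$-norm into local contributions, obtaining a constant $D_1>0$ with
\begin{equation*}
\| w(0,\cdot) - u_{\text{snap}}(0,\cdot) \|_b^2 \le D_1 \sum_j \Bigl\| \sum_{k > L_{\omega_j}} c_k^{(j)}(0)\, \psi_k^{\omega_j} \Bigr\|_b^2 .
\end{equation*}
Then, comparing the DFM mass coefficients $b^i$, $b_{F,s}$ with the permeability coefficients $\kappa^i/\mu$, $\kappa_{F,s}/\mu$ that enter the form $s^{(j)}$ — a comparison that is uniform in $j$ because $\phi^i$ is constant and $\kappa^- \le \kappa \le \kappa^+$ — I would bound each local term by $D_2\, s^{(j)}(e_0^{(j)}, e_0^{(j)})$, where $e_0^{(j)} = \sum_{k>L_{\omega_j}} c_k^{(j)}(0)\psi_k^{\omega_j}$ and $D_2>0$, exactly as in the chain of estimates in the proof of Lemma~\ref{lemma2.1}.

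Next I would invoke the spectral problem \eqref{eq:spectral}: since $e_0^{(j)}$ lies in the span of the eigenfunctions with index $k > L_{\omega_j}$, the $s^{(j)}$-orthogonality of the eigenfunctions yields $s^{(j)}(e_0^{(j)}, e_0^{(j)}) \le (\lambda_{L_{\omega_j}+1}^{\omega_j})^{-1} a_Q^{(j)}(e_0^{(j)}, e_0^{(j)})$. Enlarging the index set from $\{k > L_{\omega_j}\}$ to all $k$ only increases the nonnegative $a_Q^{(j)}$ quantity and recognizes it as $a_Q^{(j)}(u_{\text{snap}}^{(j)}(0,\cdot), u_{\text{snap}}^{(j)}(0,\cdot))$. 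The Galerkin optimality of the snapshot interpolant — $a_Q^{(j)}(u^{(j)} - u_{\text{snap}}^{(j)}, v) = 0$ for all $v \in V_{\text{snap}}^{(j)}$, already used in the proof of Lemma~\ref{lemma2.1} (it holds because the snapshot basis solves the steady coupled system \eqref{eq:snap_coupled}, whose bilinear form agrees with $a_Q^{(j)}$ up to fixed positive constants) — then gives $a_Q^{(j)}(u_{\text{snap}}^{(j)}(0,\cdot), u_{\text{snap}}^{(j)}(0,\cdot)) \le a_Q^{(j)}(u^{(j)}(0,\cdot), u^{(j)}(0,\cdot))$. Summing over $j$, using $\lambda_{L_{\omega_j}+1}^{\omega_j} \ge \Lambda$, and once more the bounded overlap to control $\sum_j a_Q^{(j)}(u^{(j)}(0,\cdot), u^{(j)}(0,\cdot))$ by a constant multiple of $\|u(0,\cdot)\|_{a_Q}^2$, delivers the claim with $C = D_1 D_2 \cdot C_{\text{ov}}$.

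Given Lemma~\ref{lemma2.1}, nothing in this argument is substantively new; the \emph{only} points requiring attention are (i) the $j$-uniform norm equivalence $\|\cdot\|_b^2 \lesssim s^{(j)}(\cdot,\cdot)$ on each neighborhood, which I would justify from the uniform bounds on $\phi^i$ and on $\kappa$, and (ii) the identification of $a_Q^{(j)}$ with the bilinear form of the snapshot problem, so that $u_{\text{snap}}^{(j)}$ is genuinely the $a_Q^{(j)}$-projection of $u^{(j)}$ onto $V_{\text{snap}}^{(j)}$. Both are already invoked in the proofs of Lemmas~\ref{lemma2.1} and~\ref{lemma2.2}, so the main obstacle is one of bookkeeping rather than of substance — this is the easiest of the three lemmas.
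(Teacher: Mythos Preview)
Your proposal is correct and follows essentially the same route as the paper: both arguments evaluate \eqref{eq:lemma_common} at $t=0$, localize via the partition of unity and bounded overlap, pass from $\|\cdot\|_b$ to $s^{(j)}$, apply the spectral gap, and finish with the Galerkin optimality of $u_{\text{snap}}^{(j)}$ in $a_Q^{(j)}$. Your write-up is in fact slightly more careful than the paper's about the final bounded-overlap constant when passing from $\sum_j a_Q^{(j)}(u^{(j)}(0,\cdot),u^{(j)}(0,\cdot))$ to $\|u(0,\cdot)\|_{a_Q}^2$.
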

\begin{proof}
	Using a similar idea as in Lemma~\ref{lemma2.2}, we let
	$$e_0^{(j)} = \sum_{k>L_{\omega_j}}c_k^{(j)}(0)\psi_k^{\omega_j}(x).$$
	Then we have
	\begin{equation}
	\begin{split}
&\|u_{\text{snap}}(0,\cdot)-w(0,\cdot)\|_b^2 =\|\sum_j\chi^{\omega_j}(x)\sum_{k>L_{\omega_j}}c_k^{(j)}(0)\psi_k^{\omega_j}(x) \|_b^2= \|\sum_j \chi^{\omega_j}(x)e_0^{(j)}\|_b^2 \\
\leq &\ D_1 \sum_j\|e_0^{(j)}\|_b^2\leq D_1D_2 \sum_j s^{(j)}(e_0^{(j)}, e_0^{(j)})\leq  D_1D_2 \frac{1}{\Lambda}\sum_j a_Q^{(j)}(e_0^{(j)}, e_0^{(j)})\\
\leq &  D_1D_2 \frac{1}{\Lambda} \sum_ja_Q^{(j)}(u_{\text{snap}}^{(j)}(0,x), u^{(j)}_{\text{snap}}(0,x)) =    D_1D_2 \frac{1}{\Lambda} \sum_ja_Q^{(j)}(u^{(j)}(0,x), u^{(j)}(0,x))\\
=\ &D_1D_2 \frac{1}{\Lambda}\| u^{(j)}(0,\cdot)\|_{a_Q}^2.
	\end{split}
	\end{equation}
	This completes the proof.
	
\end{proof}

\section{List of parameters}

\begin{table}
	\centering
	\begin{tabular}{|c|c|c|}
		\hline
		Quantity	&Description &Unit\\
		\hline
		$\phi$	&Porosity &fraction\\
		$c$& Compressibility of flow	&kP$a^{-1}$\\
		$\mu$&Viscosity of liquid	&P$a\cdot$s\\
		$\kappa$ & Permeability & $\mu$m$^2$\\
		$u$ &Pressure & kPa\\
		$u^0$ &Reference Pressure& kPa\\
		$B$ &FVF& m$^3/$m$^3$\\
		$B^{\circ}$& FVF at $u^{0}$	&m$^3/$m$^3$\\
		$q_{sc}$ &Source & m$^3/$day\\
		$\delta$ &Shape factor & m$^{-2}$\\
		\hline
	\end{tabular}
	
	\caption{Units used for all quantities}
	\label{tab:units}
\end{table}

\begin{table}
	\centering
	\begin{tabular}{|c|c|}
		\hline
		Quantity	&Value\\
		\hline
		Size of model& $15000ft\times 15000ft$\\
		$\phi^m$	&0.2\\
		$\phi^f$	&0.01\\
		$\phi^v$	&0.1\\
		$\phi_{F,j}$	&1\\
		$\kappa^f$ & $10^{-12}$\\
		$\kappa^v$ & $10^{-13}$\\
		$\kappa_{F,j}$ & $8.2606\times10^{-8}$\\
		$c$	&$1.4504\times10^{-8}$\\
		$\mu$	&$8\times 10^{-3}$\\
		$u^0$ &$ 2.0684\times 10^7$\\
		$B^{\circ}$	&1.1\\
		$\delta$ &
		1/$h_{\text{min}}^2$\\
		\hline
	\end{tabular}
	
	\caption{Values of all quantities}
	\label{tab:values}
\end{table}

\bibliographystyle{plain}
\bibliography{references}

\end{document}